\newcommand{\cd}{{\cal D}}
\newcommand{\bsa}{{\boldsymbol{a}}}
\newcommand{\bsk}{{\boldsymbol{k}}}
\newcommand{\bsu}{{\boldsymbol{u}}}
\newcommand{\bsx}{{\boldsymbol{x}}}
\newcommand{\bsy}{{\boldsymbol{y}}}
\newcommand{\bsz}{{\boldsymbol{z}}}
\newcommand{\bszero}{{\boldsymbol{0}}}
\newcommand{\natu}{\mathbb{N}}
\newcommand{\real}{\mathbb{R}}
\newcommand{\ints}{\mathbb{Z}}
\newcommand{\glu}{{\!\::\:\!}}
\newcommand{\tmod}{\ \mathsf{mod}\ }
\newcommand{\phm}{\phantom{-}}
\newcommand{\mrd}{\mathrm{d}}
\newcommand{\rd}{\mathrm{\, d}}
\newcommand{\wh}{\widehat}
\newcommand{\wal}{\mathrm{wal}}
\DeclareMathOperator*{\argmin}{argmin}
\renewcommand{\emptyset}{\varnothing}
\renewcommand{\ge}{\geqslant}
\renewcommand{\le}{\leqslant}
\newcommand{\dustd}{\mathbf{U}} 
\newcommand{\e}{{\mathbb{E}}} 
\newcommand{\ul}{\underline}
\newcommand{\ol}{\overline}
\newcommand{\ult}{\underline\tau}
\newcommand{\ulfu}{{\underline f}_u}
\newcommand{\ulfv}{{\underline f}_v}
\newtheorem{theorem}{Theorem}
\newtheorem{proposition}{Proposition}
\newtheorem{lemma}{Lemma}
\newcommand{\sumdot}{\text{\tiny$\bullet$}}
\newcommand{\sd}{\text{\tiny$\bullet$}}
\newcommand{\bsj}{{\boldsymbol{j}}}
\newcommand{\ind}{{\mathbb{I}}}
\begin{document}

\title{Higher order Sobol' indices}
 \author{Art B. Owen\\ Stanford University
 \and
 Josef Dick\\University of New South Wales
 \and
 Su Chen\\ Two Sigma LLC
 }
\date{June 2013}

\maketitle
\begin{abstract}
Sobol' indices measure the dependence of a high dimensional function on groups of variables defined on the unit cube $[0,1]^d$. They are based on the ANOVA decomposition of functions, which is an $L^2$ decomposition. In this paper we discuss generalizations of Sobol' indices which yield $L^p$ measures of the dependence of $f$ on subsets of variables. Our interest is in values $p>2$ because then variable importance becomes more about reaching the extremes of $f$.
We introduce two methods. One based on higher order moments of the ANOVA
terms and another based on higher order norms of a spectral decomposition
of $f$, including Fourier and Haar variants. Both of our generalizations
have representations as integrals over $[0,1]^{kd}$ for $k\ge 1$,
allowing direct Monte Carlo or quasi-Monte Carlo estimation. We find
that they are sensitive to different aspects of $f$, and thus
quantify different notions of variable importance.
\end{abstract}

\section{Introduction}

Sobol' indices \citep{sobo:1990} are
the standard way to measure the importance
of variables and subsets of variables
for a black box function defined
on the unit cube $[0,1]^d$.
These measures are used in applications in
aerospace engineering and climate models
among many others.

Sobol's indices are based on the ANOVA decomposition
of $[0,1]^d$, which is an $L^2$ method.
An aeronautics-astronautics
engineering student, Gary Tang, asked us about how
to construct an alternative to Sobol'
indices that would identify which variables
are most important when one is especially interested
in the extreme values taken on by the function.
In this paper we address that problem
by considering alternative measures
based on other criteria
that place greater emphasis on extremes than
$L^2$ does.

Perhaps the simplest way to get an index more
sensitive to extremes in $f$ is to replace
the target function $f(\bsx)$ by a transformed version such as $|f(\bsx)|$ or
$\exp(f(\bsx))$ or $1_{f(\bsx)\ge M}$ for a threshold $M$
and so on, followed by an application of the
usual Sobol' indices. This approach will often be reasonable.
In some cases though, it may complicate the problem.
For example, if $f$ is a sum of functions of one
variable at a time, then $f^2$ involves pairwise interactions
that were not present in $f$, and
$1_{f(\bsx)\ge M}$ may involve interactions  of all orders.
Furthermore, if $f$ only takes two values, such as $0$
or $1$, (e.g., safe versus dangerous outcomes),
then transforming it to take two different
values does not help.  As a result, we consider new generalizations.

The ANOVA can be developed as an analysis of
$L^2[0,1]^d$, or as a synthesis of Fourier, Walsh or other
basis expansions. Both of these methods can be
used to make $L^p$ generalizations.
Additionally, the Sobol' indices satisfy some
identities that can be directly generalized.
These approaches coincide for $p=2$, but they differ for $p\ne 2$.

An outline of this paper is as follows.
Section~\ref{sec:nota} introduces our notation
and reviews the ANOVA and Sobol' indices.
Section~\ref{sec:anal} presents some related
non-$L_2$ concepts, median polish and analysis
of skewness, from the literature.
One of our methods includes a crossed-effects
extension of the analysis of skewness as a special case.
Section~\ref{sec:iden} presents a generalization
based on extending one of Sobol's identities to
$p$'th order moments. The identity yields a
representation of the index as an integral
of dimension $dp$ or lower.
For even integers $p\ge 2$ we show that the resulting
estimates are nonnegative and increase when any
set of variables is replaced by a superset.
Section~\ref{sec:synt}
presents a generalization based on the synthesis
from a Fourier expansion.
When $p\ge 2$ is an even integer, then
the resulting importance measures are sums of
$p$'th powers of the moduli of the function's Fourier coefficients.
Yet they can still be estimated directly by a high dimensional
quadrature, based on an identity like one of Sobol's.
That integral can be converted into one of dimension $d(p-1)$
or lower.
We also provide a version based on Walsh functions,
which again has nonnegativity and additivity
when $p\ge 2$ is an even integer and also has an
integral representation for quadrature.
For odd $p$, we include a `Dirichlet kernel trick' that
produces non-negative importance measures based on $L_p$ 
norms of Fourier or Walsh coefficients. That method
also allows one to favor certain parts of the spectrum.

Section~\ref{sec:spec} illustrates our importance
measures on test functions that are sums or products.
We use such examples to confirm
that our measures focus on variables that bring $f$
towards extreme values.
For product functions, and even $p$, our spectral measures find that
the most important variables are those
whose spectrum is sparsest. Our moment measure, for $p=4$,
favors variables with high kurtosis and with mean
and skewness of the same sign.
We look also at the important special case
a rectangular spike:
$f(\bsx)=\prod_{j=1}^d 1_{x_j\le \epsilon_j}$.
When $f$ measures hitting a small region like this
the variable with the smallest $\epsilon_j$ is the
most important one, at least when all $\epsilon_j$
are small. Both moment and Fourier
measures favor small $\epsilon_j$.
For additive functions, having no interactions, we find that the spectral
measures  place all their importance on singleton sets.
The moment measure does this for third but not fourth moments.
Section~\ref{sec:disc} has a discussion.

\section{Notation}\label{sec:nota}
We are given a real-valued function
$f$ defined on $[0,1]^d$ for $d\ge 1$
and we are interested in quantifying
the importance to $f$ of various subsets
of the variables in the set $\cd = \{1,2,\dots,d\}$.

We make frequent use of subsets of $\cd$
as indices. The complement of $u\subseteq\cd$
is $u^c=\cd-u$, or simply $-u$ when that is typographically more
convenient.  The cardinality
of $u$ is $|u|$.  For $\bsx\in[0,1]^d$,
the point $\bsx_u\in[0,1]^{|u|}$ is made up of $x_j$
for $j\in u$ and $\mrd \bsx_u=\prod_{j\in u}\mrd x_j$.
We use $u\subset v$ to mean
that $u$ is a proper subset of $v$ (i.e., $u\subsetneq v$).

We often make a new point from components of
two old points. If $\bsx,\bsz\in[0,1]^d$
and $u\subseteq \cd$, then
$\bsy\equiv \bsx_u\glu\bsz_{-u}$ is the point in $[0,1]^d$
with $y_j=x_j$ for $j\in u$ and $y_j=z_j$ for $j\not\in u$.

\subsection{ANOVA of $[0,1]^d$}

The ANOVA decomposition represents $f(\bsx)$ via
\begin{align}\label{eq:anova}
f(\bsx) = \sum_{u\subseteq\cd} f_u(\bsx)
\end{align}
where the functions $f_u$ are defined recursively
by
\begin{align}\label{eq:analy}
f_u(\bsx) = \int_{[0,1]^{d-|u|}}\Bigl(
f(\bsx)-\sum_{v\subset u}f_v(\bsx)\Bigr)\rd \bsx_{-u}.
\end{align}
From usual conventions,
$f_\emptyset(\bsx) = \mu\equiv\int_{[0,1]^d}f(\bsx)\rd\bsx$
for all $\bsx\in [0,1]^d$.
The function $f_u$ only depends on $x_j$ for $j\in u$.
For $f\in L^2[0,1]^d$, these functions satisfy
$\int_0^1 f_u(\bsx)\rd x_j=0$ when $j\in u$,
from which it follows that
$\int f_u(\bsx)f_v(\bsx)\rd\bsx=0$ for $u\ne v$
and that
\begin{align}\label{eq:anovaresult}
\sigma^2 = \sum_{u\subseteq\cd}\sigma^2_u
\end{align}
where $\sigma^2=\int (f(\bsx)-\mu)^2\rd\bsx$,
$\sigma^2_\emptyset=0$ and
$\sigma^2_u = \int f_u(\bsx)^2\rd\bsx$
for $u\ne\emptyset$.
The name ANOVA stands for analysis of variance, as
given by~\eqref{eq:anovaresult}.
This decomposition goes back to \cite{hoef:1948}.

\cite{sobo:1969} obtained the decomposition~\eqref{eq:anova}
by a different route, described next.
Let $\phi_k$ for $k\in\ind$ be a complete orthonormal basis for $L^2[0,1]$,
where $\ind$ is a countable index set containing
a $0$ element,
with $\phi_0(x)=1$, $\forall x\in[0,1]$.
We can form the tensor product basis
$\phi_{\bsk}(\bsx)=\prod_{\ell=1}^d\phi_{k_\ell}(x_\ell)$,
for $\bsk\in\ind^d$ and then
$f(\bsx) = \sum_{\bsk\in\ind^d} \beta_{\bsk}\phi_{\bsk}(\bsx)$
where $\beta_{\bsk} = \int f(\bsx) \overline{\phi_{\bsk}}(\bsx) \rd\bsx$.
Then, with $\bszero$ a vector of $d$ zeros,
and $\ind_*$ the nonzero members of $\ind$,
\begin{align}\label{eq:synth}
f_u(\bsx) =
\sum_{\bsk_u\in\ind_*^{\,|u|}}
\beta_{\bsk_u\glu\bszero_{-u}}
\phi_{\bsk_u\glu\bszero_{-u}}(\bsx)
\end{align}
recovers the functions defined at~\eqref{eq:analy},
and $\sigma^2_u = \sum_{\bsj_u\in\ind_*^{\,|u|}}
\beta_{\bsj_u\glu\bszero_{-u}}^2$.
\cite{sobo:1969} used Haar functions for his
`decomposition into summands
of different dimensions' given by~\eqref{eq:synth}.
Where Hoeffding has an analysis,
Sobol' has a synthesis of variance.

\subsection{Sobol' indices and identities}

The importance of variable $j\in\cd$
is due in part to $\sigma^2_{\{j\}}$, but also
due to $\sigma^2_u$ for other sets $u$
with $j\in u$.  More generally, we may
be interested in the importance of a subset
$u$ of the variables.

Sobol' introduced two measures of variable
subset importance, which we denote
\begin{align*}
\underline\tau_u^2 & = \sum_{v\subseteq u}\sigma_v^2,\quad\text{and}\quad
\overline\tau_u^2 = \sum_{v\cap u\ne\emptyset}\sigma_v^2.
\end{align*}
These satisfy $\ul\tau_u^2\le\ol\tau_u^2$
and $\ul\tau_u^2+\ol\tau_{-u}^2=\sigma^2$.
Sobol' usually normalized these quantities by
$\sigma^2$, yielding
global sensitivity indices $\underline\tau_u^2/\sigma^2$
and $\overline\tau_u^2/\sigma^2$.  We will
use the unnormalized versions.

It is an elementary consequence of the ANOVA
definitions that
\begin{align}\label{eq:forult}
\iint f(\bsx)f(\bsx_u\glu\bsz_{-u})\rd\bsx\rd\bsz
=\mu^2 +\ult_u^2
\end{align}
and
\begin{align}\label{eq:forolt}
\frac12\iint \bigl(f(\bsx)-f(\bsx_{-u}\glu\bsz_{u})\bigr)^2\rd\bsx\rd\bsz
=\ol\tau_u^2.
\end{align}
We write these integrals over
$(\bsx,\bsu)\in[0,1]^{2d}$, although the first
really only uses $2d-|u|$ components and
the second uses $d+|u|$.

The great convenience of Sobol's measures is
that they can be directly estimated by integration
without bias.
We do not need to explicitly estimate, square,
integrate and sum the individual ANOVA terms.
As a consequence, we can avoid numerical optimization
and bias corrections.

 It is computationally convenient to replace
 equation~\eqref{eq:forult} by
 \begin{align}\label{eq:forult2}
 \iint f(\bsx)\bigl(f(\bsx_u\glu\bsz_{-u})-f(\bsz)\bigr)\rd\bsx\rd\bsz
 =\ult_u^2,
 \end{align}
 because it eliminates the need to
 subtract an estimate of $\mu$.
 Equation~\eqref{eq:forult2} was developed
 independently in \cite{salt:2002} and 
 by \cite{maun:2002},
 and it performs better when $\ult^2_u$ is small.
 For discussion and another estimator, see
 \cite{newsobol:tr}.

\subsection{Generalizations}

We have three different ways to generalize
the ANOVA to higher moments.  First, we can
generalize the original ANOVA decomposition
by noticing that the integrals in it minimize
a quadratic quantity, and then replacing that
quadratic by a higher order moment.
Second, we can generalize the Sobol' indices
directly, replacing the integrals of products
of pairs of function values by integrals of
products of three or more function values.
Third, we can generalize Sobol's synthesis.

\section{Related literature}\label{sec:anal}

In this section we consider two non-$L_2$ methods
from the literature.  
A natural approach to generalizing the ANOVA
to $p\ne 2$ begins with
the probabilistic interpretation of $f_u(\bsx)$
as a conditional expectation
$$
f_u(\bsx) = \e\Bigl( f(\bsx)-\sum_{v\subset u}f_v(\bsx)\mid \bsx_u\Bigr).
$$
For any $\bsx_u\in[0,1]^d$
$$
f_u(\bsx)=
\argmin_m\e\biggl( \Bigl(f(\bsx)-\sum_{v\subset u}f_v(\bsx)-m\Bigr)^2
\mid \bsx_u\biggr).
$$

Just as the conditional expectation minimizes
conditional variance, we may generalize the ANOVA
to moments $p\ge 1$, via
$$
f_u^{(p)}(\bsx)=
\argmin_m\e\biggl( \Bigl|f(\bsx)-\sum_{v\subset u}f^{(p)}_v(\bsx)-m\Bigr|^p
\mid \bsx_u\biggr).
$$
This generalization satisfies
$f(\bsx)=\sum_uf^{(p)}_u(\bsx)$
through the definition
of $f_{\cd}^{(p)}$, but the terms in it are not
generally orthogonal.  Nor do they decompose
$\int |f(\bsx)|^p\rd\bsx$, nor do they
generally integrate to $0$ over $x_j$
for $j\in u$.
If $|f|$ is bounded, then
there is a $p=\infty$ version
corresponding to a statistic called the midrange. 

It is cumbersome to minimize norms other than
$L_2$ to define alternatives to $f_u$.
The one example we found for this approach is
the median polish method, in the next section.
It uses  $p=1$,
which might be expected to place {\sl less}
emphasis on extremes of $f$ than the ANOVA, 
and is based on conditional medians.

\subsection{Median polish}

\cite{tuke:1977}
describes the median polish algorithm
for a two dimensional table of numbers
$X_{ij}$, $i=1,\dots,I$ and $j=1,\dots,J$.
The median polish algorithm generates a decomposition
$$X_{ij} = a_i + b_j + R_{ij}.$$
Starting with $a_i=b_j=0$ and $R_{ij}=X_{ij}$,
it alternates between row steps
\begin{align*}
m_i &\gets\text{median}(R_{i1},\dots,R_{iJ}),\quad 1\le i\le I\\
a_i &\gets a_i+m_i,\quad 1\le i\le I\\
R_{ij} &\gets R_{ij}-m_i,\quad 1\le i\le I, 1\le j\le J
\end{align*}
and analogous column steps.
\cite{sieg:1983} shows that the algorithm converges
when all of the $X_{ij}$ are rational numbers.
While median polish will converge to a
result where every row and column of $R_{ij}$
has median $0$, the result does necessarily
have the $L_1$ minimizing values of $a_i$ and $b_j$.
For a table of data with an even number
$I=2k$ of rows, \cite{sieg:1983} gets better results via
the 'low median', which is the $k$'th smallest value,
instead of the median which averages the $k$'th
and $k+1$'st values.

In principal one could evaluate $f$ on
a grid embedded in $[0,1]^2$ and
apply the median polish algorithm.
While there may be reasonable ways to generalize
median polish to $d>2$, the necessity of
estimating the additive components in order
to measure them is computationally unattractive.

\subsection{Analysis of skewness}

\cite{wang:2001} defines an analysis of skewness
for problems in biology.
Let $X_{ij}$ be a measure on animal $j=1,\dots,n_i$
from population $i=1,\dots,I$.
Here animals are nested within populations and
the appropriate analysis of variance is:
$$
\sum_{i=1}^I\sum_{j=1}^{n_i}(X_{ij}-\bar X_{\sumdot\sumdot})^2
=\sum_{i=1}^In_i(\bar X_{i\sumdot}-\bar X_{\sumdot\sumdot})^2
+\sum_{i=1}^I\sum_{j=1}^{n_i}(X_{ij}-\bar X_{i\sumdot})^2.
$$

An analogous analysis of skewness is
\begin{align*}
\sum_{i=1}^I\sum_{j=1}^{n_i}(X_{ij}-\bar X_{\sumdot\sumdot})^3
&=\sum_{i=1}^In_i(\bar X_{i\sumdot}-\bar X_{\sumdot\sumdot})^3
+\sum_{i=1}^I\sum_{j=1}^{n_i}(X_{ij}-\bar X_{i\sumdot})^3\\
&+ 3\sum_{i=1}^I(\bar X_{i\sumdot}-\bar X_{\sumdot\sumdot})
\sum_{j=1}^{n_i}(X_{ij}-\bar X_{i\sumdot})^2.
\end{align*}
The terms above correspond to skewness of
group means, skewness of observations within
groups and a third term measuring the correlation
of within group variance and the group mean.
The relative sizes of these terms have been interpreted
in terms of driven versus passive trends in evolutionary
biology.
The total skewness can be negative as can any of
its terms.

The analysis is centered on $\bar X_{\sd\sd}$
which is not generally the minimizer of $\sum_i\sum_j|X_{ij}-m|^3$
over $m\in\real$. Similarly, $\bar X_{i\sd}$
minimizes $\sum_j|X_{ij}-m|^2$ not
 $\sum_j|X_{ij}-m|^3$.
In other words, this method is not based on generalizing
the successive minimization property of ANOVA terms.

For functions on the unit cube, we can develop
an analysis of skewness. A crossed decomposition
is more appropriate than a nested one.
Let $f(\bsx)=\mu+\sum_{u\ne\emptyset}f_u(\bsx)$
be the ANOVA decomposition of $f$.
Then
\begin{align*}
\int (f(\bsx)-\mu)^3\rd\bsx
& = \sum_{u\ne\emptyset}\sum_{v\ne\emptyset}\sum_{w\ne\emptyset}\int f_u(\bsx)f_v(\bsx)f_w(\bsx)\rd \bsx
\end{align*}
The product $f_uf_vf_w$ has mean zero if
there is some index $j$ that belongs to precisely
one of the sets $u$, $v$, $w$.
There can be more nonzero terms than nonempty
subsets of $\cd$. For example
$f_{\{1,2\}}(\bsx)f_{\{2,3\}}(\bsx)f_{\{1,3\}}(\bsx)$
need not integrate to zero.
After eliminating the terms that must be zero, we
find that $\int (f(\bsx)-\mu)^3\rd\bsx$ equals
\begin{align*}
\sum_{u\ne\emptyset}\int f_u(\bsx)^3\rd\bsx
&+\sum_{u\ne\emptyset}\sum_{v\ne u\atop v\ne\emptyset}
\sum_{z\subset u\cap v}
\int f_u(\bsx)f_v(\bsx)f_{(u\Delta v)\cup z}\rd\bsx.
\end{align*}

For example, with $d=2$,
there are $3$ nonempty subsets
of $\{1,2\}$ providing $27$ combinations
for $u$, $v$ and $w$ of which only $12$ vanish,
yielding
\begin{align*}
\int (f(\bsx)-\mu)^3\rd\bsx
& =
\int f_{\{1\}}^3(\bsx)\rd\bsx
+\int f_{\{2\}}^3(\bsx)\rd\bsx
+\int f_{\{1,2\}}^3(\bsx)\rd\bsx\\
& +
3\int f_{\{1\}}(\bsx)f_{\{1,2\}}^2(\bsx)\rd\bsx
+3\int f_{\{2\}}(\bsx)f_{\{1,2\}}^2(\bsx)\rd\bsx\\
&+6\int f_{\{1\}}(\bsx)f_{\{2\}}(\bsx)f_{\{1,2\}}(\bsx)\rd\bsx.
\end{align*}

Our moment based method in Section~\ref{sec:iden} 
provide crossed decompositions for $d$ dimensions
and $p$'th powers. The terms are sums together
into $2^d-1$ effects.

\section{Generalizing the Sobol' identity}\label{sec:iden}

Instead of generalizing the ANOVA to higher
moments, we find it more convenient to directly
generalize the identity~\eqref{eq:forult}
which yields $\mu^2+\ult^2_u$. We are
generalizing $\mu^2 + \ult^2_u$
instead of $\ult^2_u$, because
the minimizer of $\int |f(\bsx)-m|^p\rd\bsx$
over $m$, is the mean when $p=2$, but is otherwise
not easy to identify.

Where~\eqref{eq:forult} uses $2$ points
in $[0,1]^d$ with common $\bsx_u$,
our generalization works via
$p\ge2$ such points.
Define $\ult^{(p)}$ via
\begin{align}\label{eq:ult(p)}
\ul\tau^{(p)}_u+\mu^p=
\int\!\!\cdots\!\!\int
\prod_{k=1}^p f(\bsx_u\glu\bsz^{(k)}_{-u})
\rd\bsx \prod_{k=1}^p\rd\bsz^{(k)}
\end{align}
where $\bsz^{(1)},\dots,\bsz^{(p)}\in[0,1]^d$.
This integral is over $[0,1]^{(p+1)d}$ but only uses $|u|+ p(d-|u|)$
components.
For $p=2$, we get the usual Sobol' sensitivity
indices (plus $\mu^2$).
The desirable property of~\eqref{eq:ult(p)} is that it is a
multivariable integral and may be estimated by Monte Carlo
or quasi-Monte Carlo sampling without requiring any numerical
optimization.

When we seek to estimate $\ult^{(p)}_u$ it is necessary
to subtract an estimate of $\mu^p$.
One approach, generalizing an estimate studied in
\cite{jano:klei:lagn:node:prie:2012:tr}
is to use
\begin{align}\label{eq:genjano}
\wh\ult^{(p)}_u
&=\frac1n\sum_{i=1}^n
\prod_{k=1}^pf(\bsx_{i,u}\glu\bsz_{i,-u}^{(k)})
 -\hat\mu^p,\quad\text{where}\\
\hat\mu&=\frac1{np}\sum_{i=1}^n\sum_{k=1}^p
f(\bsx_{i,u}\glu\bsz_{i,-u}^{(k)}).
\end{align}
A second approach, generalizing an estimate in
\cite{maun:2002} and \cite{salt:2002} takes
\begin{align}\label{eq:genmaunsalt}
\wh\ult^{(p)}_u
&=\frac1n\sum_{i=1}^n
\Bigl(
\prod_{k=1}^pf(\bsx_{i,u}\glu\bsz_{i,-u}^{(k)})
 -\prod_{k=1}^pf(\bsz_i^{(k)})
\Bigr),
\end{align}
a sample version of the identity
$$
\ult^{(p)}_u =
\int\biggl(
\prod_{k=1}^pf\bigl(\bsx_{u}\glu\bsz_{-u}^{(k)}\bigr)
-\prod_{k=1}^pf\bigl(\bsz_{i}^{(k)}\bigr)
\biggr)\rd\bsx\prod_{k=1}^p\rd\bsz^{(k)}.
$$
Equation~\eqref{eq:genmaunsalt} provides unbiased estimates
of $\ult^{(p)}_u$.
Even for $p=2$ it is known that
neither estimate~\eqref{eq:genjano} or~\eqref{eq:genmaunsalt}
is always better than the other.
For instance \cite{sobolmatrix}
finds that~\eqref{eq:genmaunsalt}
is more accurate in some examples
with small $\ult^2_u$, while~\eqref{eq:genjano}
is better on some examples with large $\ult^2_u$.

The most interesting cases are $p=3$, which gives us
a skewness measure for each subset of variables,
and $p=4$, the smallest even power above $2$.
For even integers $p\ge 4$ we
get nonnegative measures that are increasing in~$u$
as shown below.
We will use
\begin{align}\label{eq:ulf}
\ulfu(\bsx) = \sum_{v\subseteq u}f_v(\bsx) = \e\bigl( f(\bsx)\mid\bsx_u\bigr),
\end{align}
when $\bsx\sim\dustd[0,1]^d$.

\begin{proposition}\label{prop:ascondmu}
For integer $p\ge 1$,
$\ul\tau_u^{(p)} + \mu^p = \e\bigl( \ulfu(\bsx_u)^p\bigr)$.
\end{proposition}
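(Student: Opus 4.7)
The plan is to show that the $(p+1)d$-fold integral defining $\ul\tau_u^{(p)}+\mu^p$ collapses to an integral over $[0,1]^{|u|}$ of $\ulfu(\bsx_u)^p$, by a direct Fubini argument. The key observation is that the integrand
$$\prod_{k=1}^p f(\bsx_u\glu\bsz^{(k)}_{-u})$$
depends on $\bsx$ only through $\bsx_u$, and the dependence on the $\bsz^{(k)}$ factorizes across $k$ since the $k$th factor only involves $\bsz^{(k)}_{-u}$.

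First I would integrate out the variables that do not appear. The components $\bsx_{-u}$ and $\bsz^{(k)}_u$ for each $k$ do not enter the integrand, so integrating against the uniform measure on $[0,1]^{d-|u|}$ and $[0,1]^{|u|}$ respectively contributes only factors of $1$. This reduces the integral to
$$\int_{[0,1]^{|u|}}\int_{[0,1]^{(d-|u|)p}}\prod_{k=1}^p f(\bsx_u\glu\bsz^{(k)}_{-u})\,\rd\bsx_u\prod_{k=1}^p\rd\bsz^{(k)}_{-u}.$$

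Next, since the integrand factors as a product over $k$ of functions depending on disjoint blocks of variables $\bsz^{(k)}_{-u}$, I would pull the product outside and apply Fubini to obtain
$$\int_{[0,1]^{|u|}}\prod_{k=1}^p\left(\int_{[0,1]^{d-|u|}} f(\bsx_u\glu\bsz^{(k)}_{-u})\,\rd\bsz^{(k)}_{-u}\right)\rd\bsx_u.$$
Each inner integral is precisely the conditional expectation $\e(f(\bsX)\mid\bsX_u=\bsx_u)$ when $\bsX\sim\dustd[0,1]^d$, which by~\eqref{eq:ulf} equals $\ulfu(\bsx_u)$. Hence the expression simplifies to $\int_{[0,1]^{|u|}}\ulfu(\bsx_u)^p\rd\bsx_u=\e(\ulfu(\bsX_u)^p)$, matching the claim.

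There is no real obstacle here: once one notices that the $\bsx_{-u}$ coordinates are dummy and that the integrals over $\bsz^{(k)}_{-u}$ are independent across $k$, the identification with $\ulfu$ via~\eqref{eq:ulf} is immediate. The only point requiring a moment of care is bookkeeping the components that drop out of the $[0,1]^{(p+1)d}$ integral, as already noted in the text when the effective dimension is said to be $|u|+p(d-|u|)$.
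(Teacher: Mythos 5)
Your proof is correct and is essentially the paper's argument in integral rather than probabilistic dress: where you integrate out the dummy coordinates and use Fubini to factor the product over the independent blocks $\bsz^{(k)}_{-u}$, the paper conditions on $\bsx_u$, writes $f=\ulfu+h$ with $\e(h\mid\bsx_u)=0$, and kills the cross terms — the same conditional-independence fact. Your identification of each inner integral with $\ulfu(\bsx_u)$ via~\eqref{eq:ulf} is exactly right, so nothing is missing.
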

\begin{proof}
Define $h(\bsx) = f(\bsx)-\ulfu(\bsx_u)$
and $\bsy_k = \bsx_u\glu\bsz_{-u}^{(k)}$, for $k=1,\dots,p$.
Then $\e(h(\bsy_j)\mid\bsx_u)=0$ and
\begin{align*}
\mu^p+\ul\tau_u^{(p)}
& = \e\biggl( \e\biggl(\,\prod_{k=1}^p (\ulfu(\bsx_u)+h(\bsy_k))
\mid\bsx_u\biggr)\biggr)\\
& = \e\bigl( \e(\ulfu(\bsx_u)^p\mid\bsx_u)\bigr) \\
& = \e\bigl( \ulfu(\bsx_u)^p\bigr).\qedhere
\end{align*}
\end{proof}


\begin{theorem}\label{thm:monotone}
Let $f\in L^p[0,1]^d$ 
for an even integer $p\ge 2$.
Then
$
\ul\tau_u^{(p)} \le \ul\tau_v^{(p)}
$
holds when $u\subseteq v\subseteq\cd$.
\end{theorem}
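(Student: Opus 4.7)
The plan is to reduce the inequality to a single application of Jensen's inequality via Proposition~\ref{prop:ascondmu}. Since the identity $\ul\tau_u^{(p)}+\mu^p=\e\bigl(\ulfu(\bsx_u)^p\bigr)$ holds for every $u$, the claim $\ul\tau_u^{(p)}\le\ul\tau_v^{(p)}$ is equivalent, after adding the same $\mu^p$ to both sides, to
\[
\e\bigl(\ulfu(\bsx_u)^p\bigr)\le\e\bigl(\ulfv(\bsx_v)^p\bigr).
\]

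The next step is to exploit the tower property. By~\eqref{eq:ulf} we have $\ulfu(\bsx_u)=\e(f(\bsx)\mid\bsx_u)$ and $\ulfv(\bsx_v)=\e(f(\bsx)\mid\bsx_v)$. When $u\subseteq v$, the $\sigma$-algebra generated by $\bsx_u$ is contained in the one generated by $\bsx_v$, so the tower property yields $\ulfu(\bsx_u)=\e(\ulfv(\bsx_v)\mid\bsx_u)$. Thus the inequality to prove becomes
\[
\e\Bigl(\,\e\bigl(\ulfv(\bsx_v)\mid\bsx_u\bigr)^p\Bigr)\le\e\bigl(\ulfv(\bsx_v)^p\bigr).
\]

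For the final step, I would apply the conditional Jensen inequality to the function $t\mapsto t^p$, which is convex on $\real$ precisely because $p$ is an even integer (so that $|t|^p=t^p$). This gives, pointwise in $\bsx_u$,
\[
\e\bigl(\ulfv(\bsx_v)\mid\bsx_u\bigr)^p\le\e\bigl(\ulfv(\bsx_v)^p\mid\bsx_u\bigr),
\]
and taking expectations of both sides yields the desired inequality by the tower property once more. The integrability needed to justify Jensen follows from $f\in L^p[0,1]^d$: indeed $\ulfv(\bsx_v)=\e(f\mid\bsx_v)$ satisfies $\e(|\ulfv|^p)\le\e(|f|^p)<\infty$ by the same Jensen argument.

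No step looks like a serious obstacle; the only subtlety is that monotonicity of $t\mapsto t^p$ fails on $\real$ while convexity does hold for even integer $p$, and it is convexity (not monotonicity) that is needed here. This is also the precise place where evenness of $p$ enters—an odd $p\ge 3$ would make $t\mapsto t^p$ non-convex and the inequality could fail, consistent with the fact that the skewness-type case $p=3$ is excluded from the statement.
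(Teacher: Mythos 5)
Your proof is correct and is essentially the paper's argument: the paper writes $\ulfv(\bsx_v)=\ulfu(\bsx_u)+h(\bsx)$ with $\e(h\mid\bsx_u)=0$ and applies conditional Jensen to $\varphi(y)=y^p$, which is exactly your tower-property-plus-conditional-Jensen step in different notation. The only cosmetic difference is that you skip the paper's (unneeded) reduction to $v=u\cup\{j\}$ and make the integrability and the role of evenness explicit, which is a slight improvement in rigor but not a different route.
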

\begin{proof}
It suffices to consider the case
where $v=u\cup\{j\}$ for $j\not\in u$.
Let $h(\bsx) = \ulfv(\bsx_v)
-\ulfu(\bsx_u)
=\sum_{w\subseteq u}f_{w\cup \{j\}}(\bsx)$.
Then by Proposition~\ref{prop:ascondmu},
\begin{align*}
\mu^p+\ul\tau_v^{(p)}
&=\e( \ulfv (\bsx_v)^p)
=\e( (\ulfu (\bsx_u)+h(\bsx))^p)\\
&=\e( \e( (\ulfu (\bsx_u)+h(\bsx))^p\mid\bsx_u))\\
&\ge\e( \e( \ulfu (\bsx_u)^p\mid\bsx_u))
=\mu^p+\ul\tau_u^{(p)}
\end{align*}
by convexity of the function $\varphi(y)=y^{p}$.
\end{proof}

From Theorem~\ref{thm:monotone}, we see
that $\ul\tau^{(p)}_u$ has some important
properties for a subset importance quantity
when $p$ is an even integer.
First
$\ul\tau_u^{(p)}\ge\ul\tau_\emptyset^{(p)}=0$,
and so the importance of every subset is nonnegative.
Second, increasing the number of components in
a subset does not make the measure smaller.
Both of these properties also hold for the
measure
$\ul\tau_u^{\varphi} = \e( \varphi(\ulfu(\bsx)))-\varphi(\mu)$
for convex non-negative functions $\varphi$,
but when $\varphi(y)$ is even
power of $y$, we have a convenient estimation
formula based on~\eqref{eq:ult(p)} that lets us
avoid having to compute an estimate of $\ulfu$.

Odd power variable measures like $\ul\tau_u^{(3)}$
do not have the nesting property of Theorem~\ref{thm:monotone}
and they can take negative values. Such negative values may be informative
and interpretable.  For example if $\ul\tau_{\{1\}}^{(3)}<0$
while $\ul\tau_{\{2\}}^{(3)}>0$
this may indicate that controlling $x_1$ is
more important for attaining (or avoiding) very
small values of $f$ while $x_2$ is more
important for large values of $f$.

\section{Generalizing the synthesis}\label{sec:synt}

In this section we introduce a multilinear
operator that allows a generalization of the
synthesis approach to ANOVA.
We use two different bases, Fourier and Walsh.

\subsection{Fourier synthesis}
For $0 \le j < p$ let $f_j:[0,1]^d \to \real$ have a Fourier
expansion
\begin{equation*}
f_j(\bsx) = \sum_{\bsk \in \mathbb{Z}^d} \widehat{f}_j(\bsk)
\mathrm{e}^{2\pi \mathrm{i} \bsk \cdot \bsx}.
\end{equation*}
For any $j\in\{0,1,\dots,p-1\}$ let its successor
be $j+ \equiv j+1\hspace{-1mm}\pmod{p}$
and its predecessor be $j- \equiv j-1\hspace{-1mm}\pmod{p}$.
Our multilinear operator is
\begin{eqnarray}\label{eq:multifourier}
\langle f_0,\ldots, f_{p-1} \rangle_p = \int_{[0,1]^{dp}}
\prod_{j=0}^{p-1} f_j\bigl(\{(-1)^j (\bsx_j-\bsx_{j+})\}\bigr)
\rd \bsx_0 \cdots \,\mathrm{d}\bsx_{p-1},
\end{eqnarray}
where $\{\bsz\} = \bsz -  \lfloor \bsz \rfloor$ is the fractional part
of $\bsz$ (componentwise).

The following result is the fundamental lemma,
giving a multilinear orthogonality property of the
operator on Fourier functions.
\begin{lemma}\label{lem_fundamental}
Let $p \ge 2$ be an integer and $\bsk_0,\ldots, \bsk_{p-1} \in \mathbb{Z}^d$ and let
$\phi_{\bsk}(\bsx) = \mathrm{e}^{2\pi \mathrm{i} \bsk \cdot \bsx}$.
Then
\begin{equation*}
\langle \phi_{\bsk_0},\ldots, \phi_{\bsk_{p-1}} \rangle_p =
\begin{cases}
1, & \bsk_j=(-1)^j\bsk_0,\quad j=1,\dots,p-1 \\
0, & \text{otherwise.}
\end{cases}
\end{equation*}
\end{lemma}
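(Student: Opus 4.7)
The plan is to reduce the multilinear integral to a product of one-variable exponential integrals and then solve the resulting linear system on the frequencies.

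First I would substitute $f_j = \phi_{\bsk_j}$ into the definition \eqref{eq:multifourier}. Since each $\bsk_j$ has integer entries, we have $e^{2\pi \mathrm{i}\bsk_j \cdot \{\bsz\}} = e^{2\pi \mathrm{i} \bsk_j \cdot \bsz}$ for any real vector $\bsz$, so the fractional-part brackets can be dropped. The integrand then becomes the single exponential
\begin{equation*}
\exp\!\Bigl(2\pi \mathrm{i} \sum_{j=0}^{p-1}(-1)^j \bsk_j\cdot (\bsx_j-\bsx_{j+})\Bigr),
\end{equation*}
which factors as a product over $\ell = 0,\ldots,p-1$ of $\exp(2\pi \mathrm{i}\, \bsc_\ell\cdot \bsx_\ell)$ once we regroup by the variable $\bsx_\ell$.

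The key bookkeeping step is to compute $\bsc_\ell$. The variable $\bsx_\ell$ picks up $+(-1)^\ell \bsk_\ell$ from the $j=\ell$ summand and $-(-1)^{\ell-}\bsk_{\ell-}$ from the $j=\ell-$ summand (because $(\ell-)+ = \ell$), giving
\begin{equation*}
\bsc_\ell = (-1)^\ell \bsk_\ell - (-1)^{\ell-}\bsk_{\ell-}.
\end{equation*}
The integral over $\bsx_\ell\in[0,1]^d$ equals $1$ if $\bsc_\ell=\bszero$ and $0$ otherwise, so the whole expression equals $1$ when $\bsc_\ell=\bszero$ for every $\ell$, and vanishes otherwise.

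Finally I would solve the system $\bsc_\ell=\bszero$. For $\ell=1,\dots,p-1$, where $\ell- = \ell-1$, this reads $\bsk_\ell = -\bsk_{\ell-1}$, so by induction $\bsk_\ell = (-1)^\ell \bsk_0$. The remaining equation at $\ell=0$, which uses $\ell- = p-1$, reads $\bsk_0 = (-1)^{p-1}\bsk_{p-1}$; substituting $\bsk_{p-1} = (-1)^{p-1}\bsk_0$ shows this is automatically satisfied for both parities of $p$. Thus $\bsc_\ell = \bszero$ for all $\ell$ is equivalent to $\bsk_j = (-1)^j \bsk_0$ for $j=1,\dots,p-1$, proving the lemma.

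The only subtlety is the cyclic wrap-around at $\ell=0$: one must check that the derived condition from $\ell=1,\dots,p-1$ is consistent with, rather than overdetermined by, the equation at $\ell=0$. This consistency check is where paying attention to the sign $(-1)^{p-1}$ matters, but as shown it works out uniformly in $p$.
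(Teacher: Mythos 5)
Your proof is correct and follows essentially the same route as the paper: drop the fractional-part brackets using integrality of the frequencies, factor the integrand into one exponential per variable $\bsx_\ell$, and observe that each factor integrates to $1$ or $0$ according to whether its net frequency $\bsc_\ell$ vanishes. Your explicit verification that the wrap-around equation at $\ell=0$ is automatically implied by the others (uniformly in the parity of $p$) is a nice touch that the paper handles by splitting into even and odd cases, but the substance is identical.
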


\begin{proof}
For $p$ even we have
\begin{align*}
\langle \phi_{\bsk_0},\ldots, \phi_{\bsk_{p-1}} \rangle_p
& =
\int_{[0,1]^{dp}} \mathrm{e}^{2\pi \mathrm{i} \sum_{j=0}^{p-1}
\bsk_j \cdot \{(-1)^j (\bsx_j- \bsx_{j+} ) \}}
\,\mathrm{d}\bsx_0 \cdots \,\mathrm{d}\bsx_{p-1} \\
& =
\prod_{j=0}^{p-1} \int_{[0,1]^{d}} \mathrm{e}^{2\pi
\mathrm{i} (-1)^j (\bsk_j + \bsk_{j-})\cdot\bsx_j}
\,\mathrm{d}\bsx_j
\end{align*}
and for $p$ odd we have
\begin{align*}
\langle \phi_{\bsk_0},\ldots, \phi_{\bsk_{p-1}} \rangle_p
& = \prod_{j=1}^{p-1} \int_{[0,1]^{d}} \mathrm{e}^{2\pi
\mathrm{i} (-1)^j (\bsk_j + \bsk_{j-})\cdot\bsx_j}
\,\mathrm{d}\bsx_j \int_{[0,1]^d} \mathrm{e}^{2\pi \mathrm{i} (\bsk_0 - \bsk_{p-1}) \bsx_0} \,\mathrm{d} \bsx_0.
\end{align*}
The integrals are $1$ if $\bsk_j= (-1)^j \bsk_0$ for $0 \le  j < p$
and $0$ otherwise, which implies the result.
\end{proof}

The function $\langle \cdot\,,\ldots,\cdot \rangle_p$ is symmetric and
multi-linear. For integers $p\ge 2$ we will use
\begin{align*}
\sigma_p(f) &
\equiv \langle f,\ldots, f \rangle_p \\
&=\sum_{\bsk_0,\ldots,\bsk_{p-1} \in \mathbb{Z}^d} \prod_{j=0}^{p-1}
\widehat{f}(\bsk_j) \int_{[0,1]^{dp}} \mathrm{e}^{2\pi \mathrm{i}
\sum_{j=0}^{p-1} (-1)^j \bsk_j \cdot (\bsx_j- \bsx_{j+})
} \,\mathrm{d}\bsx_0 \dots \,\mathrm{d}\bsx_{p-1}\\
& = \sum_{\bsk\in\mathbb{Z}^d} \widehat{f}(\bsk)^{\lceil p/2 \rceil} \widehat{f}(-\bsk)^{\lfloor p/2\rfloor}.
\end{align*}
If $f$ is a real-valued function we have $\widehat{f}(-\bsk) = \overline{\widehat{f}(\bsk)}$. If $p$ is an even integer we therefore get
\begin{equation*}
\langle f,\ldots,f\rangle_p = \sum_{\bsk\in\mathbb{Z}^d} \bigl|\widehat{f}(\bsk)\bigr|^p.
\end{equation*}


The ANOVA decomposition
$f(\bsx) = \sum_{u \subseteq \cd} f_u(\bsx)$,
has terms
\begin{equation*}
f_u(\bsx) = \sum_{\bsk_u \in \ints_*^{|u|}}
\widehat{f}(\bsk_u\glu\bszero_{-u}) \mathrm{e}^{2\pi \mathrm{i}
\bsk_u\cdot \bsx_u}.
\end{equation*}
The diagonality of
the multilinear operator~\eqref{eq:multifourier}
yields a $p$-fold orthogonality for the ANOVA terms:
\begin{lemma}
Let $f$ be as above and let $f=\sum_u f_u$ be the ANOVA
decomposition of $f$. Then for all $u_0,\ldots, u_{p-1} \subseteq
\cd$, such that there are $i,j \in \{0,\ldots, p-1\}$
with $u_i \neq u_j$, we have
\begin{equation*}
\langle f_{u_0}, \ldots, f_{u_{p-1}} \rangle_p = 0.
\end{equation*}
\end{lemma}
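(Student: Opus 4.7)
The plan is to expand each ANOVA term in its Fourier series, invoke the multilinearity of $\langle\cdot,\ldots,\cdot\rangle_p$ to reduce to the evaluation on pure exponentials, and then apply Lemma~\ref{lem_fundamental} to see that the support conditions on the surviving Fourier modes force all the $u_j$ to coincide.

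Concretely, I would begin by recalling that for each $i$,
\begin{equation*}
f_{u_i}(\bsx) = \sum_{\bsk_{u_i} \in \ints_*^{|u_i|}} \widehat{f}(\bsk_{u_i}\glu\bszero_{-u_i})\,\phi_{\bsk_{u_i}\glu\bszero_{-u_i}}(\bsx),
\end{equation*}
so the Fourier spectrum of $f_{u_i}$ is supported on exactly those $\bsk \in \ints^d$ whose nonzero coordinates form the set $u_i$. Using multilinearity of $\langle\cdot,\ldots,\cdot\rangle_p$ (justified by absolute convergence, which follows from $f \in L^2$ applied $p$ times via H\"older, or by truncation), expand
\begin{equation*}
\langle f_{u_0},\ldots,f_{u_{p-1}}\rangle_p = \sum_{\bsk_0,\ldots,\bsk_{p-1}} \prod_{j=0}^{p-1}\widehat{f}(\bsk_j)\,\langle \phi_{\bsk_0},\ldots,\phi_{\bsk_{p-1}}\rangle_p,
\end{equation*}
where each $\bsk_j$ ranges over vectors supported exactly on $u_j$ (nonzero on $u_j$, zero off $u_j$).

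Next I would apply Lemma~\ref{lem_fundamental}: the only surviving terms are those with $\bsk_j = (-1)^j \bsk_0$ for $j=1,\ldots,p-1$. The key observation is that the set of coordinates on which $\bsk_j$ is nonzero is invariant under negation, hence equals the set of coordinates on which $\bsk_0$ is nonzero. Since the spectrum of $f_{u_j}$ forces the nonzero coordinates of $\bsk_j$ to be exactly $u_j$, the constraint $\bsk_j = (-1)^j \bsk_0$ can only be satisfied with a nonzero contribution if $u_j = u_0$ for every $j$. Under the hypothesis that some $u_i \ne u_j$, no admissible tuple $(\bsk_0,\ldots,\bsk_{p-1})$ exists, and the whole sum is zero.

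The only mildly delicate point is justifying the interchange of infinite summation and integration when expanding the multilinear form; the main idea is routine once Lemma~\ref{lem_fundamental} is available, and I do not expect any real obstacle beyond bookkeeping the support conditions.
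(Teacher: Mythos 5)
Your proposal is correct and follows essentially the same route as the paper: expand each $f_{u_i}$ in its Fourier series, apply Lemma~\ref{lem_fundamental}, and observe that the surviving condition $\bsk_j=(-1)^j\bsk_0$ forces all the supports, hence all the $u_j$, to coincide. Your version is in fact slightly more careful than the paper's (which only notes that $\bsk_{u_j}\glu\bszero_{-u_j}\ne-\bsk_{u_i}\glu\bszero_{-u_i}$ and cites the fundamental lemma), but the substance is identical.
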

\begin{proof}
If $u_i \neq u_j$, then $\bsk_{u_j}\glu\bszero_{-u_j} \neq
- \bsk_{u_i}\glu\bszero_{-u_i}$ for all $\bsk_{u_j} \in
\ints_*^{|u_j|}$ and $\bsk_{u_i} \in \ints_*^{|u_i|}$. Then
\begin{equation*}
\int_{[0,1]^d} \mathrm{e}^{2\pi \mathrm{i}
(\bsk_{u_j}\glu\bszero_{-u_j} + \bsk_{u_i}\glu\bszero_{-u_i}) \cdot
\bsx} \,\mathrm{d}\bsx = 0.
\end{equation*}
The result follows now from Lemma~\ref{lem_fundamental}.
\end{proof}


\begin{lemma}\label{lem_fs}
Let $f$ be as above and let $f=\sum_u f_u$ be the ANOVA
decomposition of $f$. Then we have
\begin{equation*}
\sigma_p(f) = \sum_u \sigma_p(f_u).
\end{equation*}
\end{lemma}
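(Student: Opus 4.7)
The plan is to expand $\sigma_p(f) = \langle f, \ldots, f\rangle_p$ by substituting the ANOVA decomposition $f = \sum_{u\subseteq\cd} f_u$ into each slot of the multilinear operator, and then exploit the orthogonality result from the preceding lemma to annihilate all off-diagonal terms.

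First I would invoke multilinearity. Since $\langle \cdot,\ldots,\cdot\rangle_p$ is linear in each of its $p$ arguments and the ANOVA sum has only finitely many ($2^d$) terms, there is no convergence subtlety and we may freely distribute to obtain
\begin{equation*}
\sigma_p(f) = \langle f, \ldots, f\rangle_p
= \sum_{u_0 \subseteq \cd}\cdots\sum_{u_{p-1}\subseteq\cd}
\langle f_{u_0}, f_{u_1},\ldots, f_{u_{p-1}}\rangle_p.
\end{equation*}

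Next I would apply the preceding lemma, which asserts that $\langle f_{u_0},\ldots, f_{u_{p-1}}\rangle_p = 0$ whenever the indexing sets $u_0,\ldots,u_{p-1}$ are not all equal. Consequently only the ``diagonal'' terms $u_0 = u_1 = \cdots = u_{p-1} = u$ contribute, yielding
\begin{equation*}
\sigma_p(f) = \sum_{u\subseteq\cd}\langle f_u,\ldots, f_u\rangle_p
= \sum_{u\subseteq\cd}\sigma_p(f_u),
\end{equation*}
which is the desired identity.

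There is essentially no main obstacle here: the work has already been done in establishing Lemma~\ref{lem_fundamental} and its ANOVA-indexed corollary. The only point one might want to double-check is that the interchange of summation and the $dp$-fold integral defining $\langle\cdot,\ldots,\cdot\rangle_p$ is legitimate, but since the ANOVA expansion of $f\in L^p$ is a finite sum over $u\subseteq\cd$, multilinearity applies termwise without any dominated-convergence argument. Thus the proof reduces to one line of bookkeeping on top of the previous lemma.
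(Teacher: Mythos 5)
Your proof is correct and follows exactly the same route as the paper's: expand $\langle f,\ldots,f\rangle_p$ by multilinearity over the finite ANOVA sum and kill all off-diagonal terms using the preceding orthogonality lemma. Your added remark that the finiteness of the sum over $u\subseteq\cd$ makes the interchange trivial is a sensible touch the paper leaves implicit.
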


\begin{proof}
Recall that $\langle f_{u_0},\ldots, f_{u_{p-1}} \rangle_p = 0$ unless
$u_0 = \cdots = u_{p-1}$.
Therefore expanding
$\sigma_p(f)  = \langle f, \cdots, f\rangle_p$ yields
\begin{align*}
\sum_{u_0,\ldots u_{p-1} \subseteq \cd} \langle f_{u_0},
\ldots, f_{u_{p-1}} \rangle_p
& =  \sum_{u \subseteq
\cd} \langle f_u,\ldots, f_u \rangle_p
 =  \sum_{u \subseteq\cd }
\sigma_p(f_u).\qedhere
\end{align*}
\end{proof}


The aim is to estimate $\sigma_p(f_u)$ or sums of those. We
investigate this in the following.
For $u\subseteq\cd$, define $\ult^{[p]}_u$ via
\begin{align}\label{eq:defultsyn}
\ul\tau^{[p]}_u+\mu^p=
\int_{[0,1]^{dp}}  \prod_{j=0}^{p-1}
f\bigl(\{(-1)^j (\bsx_{u,j}-\bsx_{u,j+})\}\glu\bsy_{-u,j}\bigr) &
\prod_{j=0}^{p-1}\mrd\bsx_{u,j}\prod_{j=0}^{p-1}\mrd\bsy_{u,j}.
\end{align}
Here $\ult^{[p]}_\emptyset=0$.

\begin{theorem}\label{thm:decomp}
Let $f\in L^p[0,1]^d$, for integer $p\ge 2$,
with ANOVA decomposition $f= \sum_u
f_u$. Then for any $u \subseteq \cd$ we have
$$
\ul\tau^{[p]}_u +\mu^p= \sum_{v \subseteq u} \sigma_p(f_v).
$$
\end{theorem}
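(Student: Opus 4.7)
The plan is to substitute the ANOVA decomposition $f = \sum_{v\subseteq\cd} f_v$ into each of the $p$ factors in the integrand of~\eqref{eq:defultsyn}, expand, and evaluate the resulting sum over tuples $(v_0,\dots,v_{p-1}) \in (2^\cd)^p$ term by term. The two blocks of integration variables, the $\bsy_{-u,j}$'s and the $\bsx_{u,j}$'s, call for two successive reductions.

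For the first reduction, each $\bsy_{-u,j}$ appears in only the $j$-th factor, so the $\bsy$-integral factorizes across $j$. For any $v\subseteq\cd$ and any fixed $\bsa_u$, the single-factor integral $\int f_v(\bsa_u \glu \bsy_{-u,j})\,\mrd\bsy_{-u,j}$ equals $f_v(\bsa_u)$ when $v\subseteq u$ (where $f_v$ does not depend on the $-u$ coordinates at all), and equals $0$ otherwise (since then there exists $\ell \in v \cap (-u)$ over which $f_v$ integrates to zero by the defining property of ANOVA components). So only tuples with every $v_j\subseteq u$ survive.

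After this first step, the remainder is a sum, over tuples $(v_0,\dots,v_{p-1})$ with $v_j\subseteq u$, of the integrals $\int \prod_j f_{v_j}(\{(-1)^j(\bsx_{u,j}-\bsx_{u,j+})\}) \prod_j \mrd \bsx_{u,j}$. Since each $f_{v_j}$ depends only on $u$-coordinates, each such integral is precisely the multilinear form $\langle f_{v_0},\dots,f_{v_{p-1}}\rangle_p$ applied within the $|u|$-dimensional $u$-block. The orthogonality lemma established just above transports verbatim to this subblock (its argument is coordinate-local), forcing $v_0=\cdots=v_{p-1}$, and the sum collapses to $\sum_{v\subseteq u}\sigma_p(f_v)$. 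Since $\sigma_p(f_\emptyset)=\mu^p$ (the constant function $\mu$ has only its $\bszero$ Fourier coefficient nonzero, and it equals $\mu$, which contributes $\mu^{\lceil p/2\rceil+\lfloor p/2\rfloor}=\mu^p$), subtracting $\mu^p$ from both sides yields the stated identity.

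The main obstacle is the bookkeeping: keeping straight that $\bsy_{-u,j}$ appears in only one factor (which is what makes the first reduction factor cleanly), and verifying that the orthogonality lemma applies inside the $u$-block. Both points are immediate from the definitions once stated carefully. No convergence issues arise, since the ANOVA expansion has only $2^d$ terms and $f\in L^p[0,1]^d$ makes each integral absolutely convergent by H\"older's inequality, which also justifies the interchange of the finite sum with the integral.
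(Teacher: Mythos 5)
Your proof is correct, but it is organized differently from the paper's. The paper's proof is a one\nobreakdash-liner: substitute the Fourier series of $f$ into \eqref{eq:defultsyn}, note that integrating over the $\bsy_{-u,j}$ kills every mode with $\bsk_{-u}\ne\bszero$, and invoke Lemma~\ref{lem_fundamental} on the $u$-block to collapse the remaining sum to $\sum_{\bsk_u}|\wh f(\bsk_u\glu\bszero_{-u})|^p=\sum_{v\subseteq u}\sigma_p(f_v)$. You instead expand $f$ into its $2^d$ ANOVA components, use the mean-zero property $\int_0^1 f_v\,\mrd x_\ell=0$ ($\ell\in v$) for the $\bsy$-reduction, and then apply the ANOVA orthogonality lemma on the $u$-block; the two steps are structurally parallel to the paper's, but run at the level of ANOVA components rather than individual Fourier modes, and your first reduction is genuinely more elementary (it needs only $L^2$ orthogonality of the components, valid since $L^p\subseteq L^2$ on the cube, rather than Fourier analysis). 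A bonus of your route is the intermediate identity: after the $\bsy$-integration the quantity is exactly $\langle \ulfu,\dots,\ulfu\rangle_p$ computed on $[0,1]^{|u|}$, which mirrors Proposition~\ref{prop:ascondmu} for the moment-based index. Two cosmetic points: your closing sentence about ``subtracting $\mu^p$ from both sides'' is unnecessary --- the identity as stated already has $\mu^p$ on the left and the $v=\emptyset$ term $\sigma_p(f_\emptyset)=\mu^p$ on the right, so once the sum collapses you are done; and when you say the integral over $\bsy_{-u,j}$ ``equals $f_v(\bsa_u)$'' for $v\subseteq u$, it is cleaner to say the integrand is constant in $\bsy_{-u,j}$, so the integral is $f_v$ evaluated at any point with $u$-coordinates $\bsa_u$.
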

\begin{proof}
Using the Fourier series representation of $f$ and
Lemma~\ref{lem_fundamental} we obtain
\begin{equation*}
\ul\tau^{[p]}_u +\mu^p
=  \sum_{\bsk_u \in \mathbb{Z}^{|u|}}
|\widehat{f}(\bsk_u\glu\bszero_{-u})|^{p} = \sum_{v \subseteq u}
\sigma_p(f_v).\qedhere
\end{equation*}
\end{proof}

Theorem~\ref{thm:decomp} shows that the
importance measures $\ult^{[p]}_u$ are sums
of contributions $\sigma_p(f_v)$ from $v\subseteq u$.
This generalizes a property of the ANOVA to
$p\ge 2$.

Theorem~\ref{thm:decomp} can be generalized in the following way. Let
$f_0,\ldots, f_{p-1}$ be functions in $L^p[0,1]^d$
for integer $p\ge 2$ with Fourier coefficients $\wh f_j(\bsk)$,
and $\mu_j  = \int f_j(\bsx)\rd\bsx$.
Next we set
\begin{align*}
& \ul\tau^{[p]}_u(f_0,\ldots, f_{p-1})  +\prod_{j=0}^{p-1}\mu_j \\
\equiv & \int_{[0,1]^{dp}}
\prod_{j=0}^{p-1} f_j\bigl((-1)^j \{\bsx_{u,j}-
\bsx_{u,j+}\}\glu\bsy_{-u,j}\bigr)
\prod_{j=0}^{p-1}\mrd\bsx_{u,j}\prod_{j=0}^{p-1}\mrd\bsy_{-u,j}.
\end{align*}
Then
$$
\ul\tau^{[p]}_u(f_0,\ldots, f_{p-1})
+\prod_{j=0}^{p-1}\mu_j
= \sum_{\bsk_u \in
\mathbb{Z}^{|u|}} \prod_{j=0}^{p-1} \widehat{f}_j((-1)^j \bsk_u\glu\bszero_{-u}).
$$

\subsection{Walsh synthesis}

Here we replace the Fourier functions
by the Walsh functions in an integer
base $b\ge 2$.
For $b=2$, the coefficients of Walsh functions are real values.
The index set is $\ind=\natu_0$
and then $\ind_*=\natu$.

For a non-negative integer $k$ with base $b$ representation
\[
   k = \kappa_{a-1} b^{a-1} + \cdots + \kappa_1 b + \kappa_0,
\]
with $\kappa_i \in \{0,1, \ldots, b-1\}$, we define the Walsh function
$\wal_{k}:[0,1) \rightarrow \{z \in \mathbb{C}: |z| = 1\}$ by
\[
  \wal_{k}(x) := \mathrm{e}^{2\pi \mathrm{i} (x_1 \kappa_0 + \cdots + x_a \kappa_{a-1})/b },
\]
for $x \in [0,1)$ with base $b$ representation $x =
x_1 b^{-1} + x_2 b^{-2} +\cdots $ (unique in the sense that
infinitely many of the $x_i$ must be different from $b-1$).

For dimension $s \ge 2$, $\bsx = (x_1, \ldots, x_s) \in [0,1)^s$ and $\bsk = (k_1,
\ldots, k_s) \in \mathbb{N}_0^s$ we define $\wal_{\bsk} : [0,1)^s
\rightarrow \{z \in \mathbb{C}: |z|=1\}$ by
\[
   \wal_{\bsk}(\bsx) := \prod_{j=1}^s \wal_{k_j}(x_j).
\]
For more information on Walsh functions see \cite{chrest:1955, fine:1949, walsh:1923}.

Let $f_j:[0,1]^d \to \mathbb{R}$ have a Walsh series expansion of
the form
\begin{equation*}
f_j(\bsx)= \sum_{\bsk \in \mathbb{N}_0^d} \widehat{f}_{\wal}(\bsk)
\wal_{\bsk}(\bsx).
\end{equation*}

For $x,y \in \{z \in \mathbb{R}: z \ge 0\}$ with base $b$ expansion $x = \sum_{i=w}^{-\infty} x_i b^{i}$ and $y = \sum_{i=w}^{-\infty} y_i b^i$ (unique in the sense that infinitely many of the $x_i$ and $y_i$ must be different from $b-1$) we set
$x \ominus y = \sum_{i=w}^{-\infty} z_i b^{i}$ where $z_i = x_i -
y_i \pmod{b}$ and $z_i \in \{0,\ldots, b-1\}$. For vectors $\bsx$ and $\bsy$ we define the operation $\ominus$ componentwise. Similarly we set $\bsx \oplus \bsy$ where we change the definition of $z_i$ to $z_i = x_i + y_i \pmod{b}$. We define $\ominus \bsx = \bszero \ominus \bsx$ and $( \ominus 1)^j \bsx = \bsx$ if $j$ is even and $\ominus \bsx$ otherwise.

We now define
\begin{equation*}
\langle f_0,\ldots, f_{p-1} \rangle_{p,\wal} = \int_{[0,1]^{dp}}
\prod_{j=0}^{p-1} f_j\bigl((\ominus 1)^j (\bsx_j \ominus \bsx_{j+})\bigr)
\,\mathrm{d} \bsx_0 \cdots \,\mathrm{d} \bsx_{p-1}.
\end{equation*}
With this definition we also have the fundamental lemma for the
Walsh system.

\begin{lemma}\label{lem_fundamental_wal}
Let $\bsk_0,\ldots, \bsk_{p-1} \in \mathbb{N}_0^d$. Then
\begin{equation*}
\langle \wal_{\bsk_0},\ldots, \wal_{\bsk_{p-1}} \rangle_{p,\wal} =
\left\{\begin{array}{ll} 1, & \bsk_j= \ominus^j \bsk_0,\quad j=1,\dots,p-1
\\ 0, & \mbox{otherwise}.
\end{array} \right.
\end{equation*}
\end{lemma}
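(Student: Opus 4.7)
The plan is to mirror the proof of Lemma~\ref{lem_fundamental} line for line, with the digit-wise group $(\mathbb{N}_0^d,\oplus)$ and its characters (the Walsh functions) playing the role of $(\mathbb{Z}^d,+)$ and the complex exponentials. The only properties of Walsh functions I will need are $\wal_{\bsk}(\bsx\oplus\bsy)=\wal_{\bsk}(\bsx)\wal_{\bsk}(\bsy)$, $\wal_{\bsk}(\ominus\bsx)=\overline{\wal_{\bsk}(\bsx)}=\wal_{\ominus\bsk}(\bsx)$, the product rule $\wal_{\bsk}(\bsx)\wal_{\bsj}(\bsx)=\wal_{\bsk\oplus\bsj}(\bsx)$, and the orthogonality $\int_{[0,1]^d}\wal_{\bsk}(\bsx)\mrd\bsx$ equals $1$ when $\bsk=\bszero$ and $0$ otherwise. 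All four are direct analogues of facts used in the Fourier proof.

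The first step is to rewrite each factor of the integrand as
$$\wal_{\bsk_j}\bigl((\ominus 1)^j(\bsx_j\ominus\bsx_{j+})\bigr)=\wal_{(\ominus 1)^j\bsk_j}(\bsx_j)\,\wal_{(\ominus 1)^{j+1}\bsk_j}(\bsx_{j+}),$$
which is immediate from the first two identities above. Since $\bsx_m$ appears in exactly two factors of the $p$-fold product (those indexed by $j=m$ and $j=m-$), I would regroup by integration variable and use the product rule to combine its two Walsh factors into a single $\wal_{\bsh_m}(\bsx_m)$. For $m\ge 1$ the resulting index is $\bsh_m=(\ominus 1)^m(\bsk_m\oplus\bsk_{m-})$; for $m=0$ it is $\bsh_0=\bsk_0\oplus(\ominus 1)^p\bsk_{p-1}$, giving a parity split that exactly mirrors the Fourier argument. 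Walsh orthogonality then collapses the $dp$-fold integral to a product of $p$ single integrals, each equal to $1$ if its index is $\bszero$ and to $0$ otherwise.

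Finally I would solve the system $\bsh_m=\bszero$: for $m\ge 1$ it forces $\bsk_m=\ominus\bsk_{m-}$, which iterates to $\bsk_j=\ominus^j\bsk_0$ for $1\le j\le p-1$, and the $m=0$ cyclic-closure condition is automatically consistent in both parities of $p$ (for $p$ even it reads $\bsk_{p-1}=\ominus\bsk_0$, matching $\ominus^{p-1}$ with $p-1$ odd; for $p$ odd it reads $\bsk_{p-1}=\bsk_0$, matching $\ominus^{p-1}$ with $p-1$ even). The step that genuinely needs care is the symbolic bookkeeping: verifying that $\ominus$ distributes over $\oplus$ on index vectors (so that the regrouping is valid) and tracking the parity exponents $(\ominus 1)^j$ cleanly through the substitution. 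Distributivity follows because complex conjugation distributes over products of Walsh functions and Walsh functions are linearly independent, so equality at the character level transfers to equality of indices; once that is in hand, the rest of the argument is a transcription of Lemma~\ref{lem_fundamental}.
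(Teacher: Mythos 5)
Your proposal is correct and is exactly the argument the paper intends: the paper states Lemma~\ref{lem_fundamental_wal} without proof, relying on the observation that the Fourier proof of Lemma~\ref{lem_fundamental} transfers verbatim once the character identities $\wal_{\bsk}(\bsx\ominus\bsy)=\wal_{\bsk}(\bsx)\wal_{\ominus\bsk}(\bsy)$, the product rule, and Walsh orthogonality are in hand, which is precisely the transcription you carry out (your parity bookkeeping for $\bsh_0$ and the chain $\bsk_m=\ominus\bsk_{m-}$ both check out). The only caveat, shared with the paper's own level of rigor, is that the $\oplus$/$\ominus$ identities hold off a set of measure zero coming from non-unique base-$b$ expansions, which does not affect the integrals.
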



All the remaining results and definitions can therefore be obtained
in an analogous manner. In particular for functions $f$ with ANOVA
decomposition $f = \sum_u f_u$ we have
\begin{equation*}
\sigma_{p,\wal}(f) = \sum_{u \subseteq \cd}
\sigma_{p,\wal}(f_u),
\end{equation*}
where
\begin{equation*}
\sigma_{p,\wal}(f) \equiv \langle f, \ldots, f \rangle_{p,\wal} =
\sum_{\bsk \in \mathbb{N}_0^d} [\widehat{f}_{\wal}(\bsk)]^{\lceil p/2 \rceil} [\widehat{f}_{\wal}(\ominus \bsk)]^{\lfloor p/2 \rfloor}.
\end{equation*}
If $p$ is even and $f$ a real-valued function, then we get
\begin{equation*}
\sigma_{p,\wal}(f) \equiv \langle f, \ldots, f \rangle_{p,\wal} =
\sum_{\bsk \in \mathbb{N}_0^d} \left|\widehat{f}_{\wal}(\bsk)\right|^{p}.
\end{equation*}

\begin{lemma}\label{lem:walshdecomp}
Let $f$ be as above and let $f=\sum_u f_u$ be the ANOVA
decomposition of $f$. Then we have
\begin{equation*}
\sigma_{p, \wal}(f) = \sum_u \sigma_{p, \wal}(f_u).
\end{equation*}
\end{lemma}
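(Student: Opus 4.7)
The plan is to mirror the Fourier-side proof of Lemma~\ref{lem_fs}, using Lemma~\ref{lem_fundamental_wal} in place of Lemma~\ref{lem_fundamental}. First I would record the Walsh analogue of the synthesis formula~\eqref{eq:synth}: since $\wal_0 \equiv 1$ and $\int_0^1 \wal_k(x)\,\mrd x = 0$ for $k \neq 0$, the ANOVA term $f_u$ has the Walsh expansion
\begin{equation*}
f_u(\bsx) = \sum_{\bsk_u \in \natu^{|u|}} \wh{f}_{\wal}(\bsk_u \glu \bszero_{-u})\, \wal_{\bsk_u \glu \bszero_{-u}}(\bsx),
\end{equation*}
so that $f_u$ is a Walsh series supported on frequencies $\bsk$ whose nonzero coordinates are exactly $u$.

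Next I would establish the $p$-fold orthogonality: for subsets $u_0, \ldots, u_{p-1} \subseteq \cd$, one has $\langle f_{u_0}, \ldots, f_{u_{p-1}} \rangle_{p,\wal} = 0$ unless $u_0 = u_1 = \cdots = u_{p-1}$. By multilinearity, this reduces to showing that
\begin{equation*}
\langle \wal_{\bsk_0 \glu \bszero_{-u_0}}, \ldots, \wal_{\bsk_{p-1} \glu \bszero_{-u_{p-1}}} \rangle_{p,\wal} = 0
\end{equation*}
whenever some pair $u_i \ne u_j$ differs, for all choices $\bsk_{u_i} \in \natu^{|u_i|}$. By Lemma~\ref{lem_fundamental_wal} the inner product above is nonzero only when $\bsk_{u_j} \glu \bszero_{-u_j} = \ominus^j (\bsk_{u_0} \glu \bszero_{-u_0})$ for every $j$. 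Since $\ominus 0 = 0$ coordinatewise, this identity forces the sets of zero-coordinates to coincide, i.e.\ $u_j = u_0$ for all $j$. Contraposition yields the claim.

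Finally, I would expand $\sigma_{p,\wal}(f) = \langle f, \ldots, f \rangle_{p,\wal}$ via multilinearity and strip the off-diagonal terms:
\begin{equation*}
\sigma_{p,\wal}(f) = \sum_{u_0, \ldots, u_{p-1} \subseteq \cd} \langle f_{u_0}, \ldots, f_{u_{p-1}} \rangle_{p,\wal} = \sum_{u \subseteq \cd} \langle f_u, \ldots, f_u \rangle_{p,\wal} = \sum_{u \subseteq \cd} \sigma_{p,\wal}(f_u).
\end{equation*}

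The only real obstacle is the bookkeeping around $\ominus$: one must confirm that the digitwise subtraction modulo $b$ carries zeros to zeros and hence preserves the coordinate-support pattern $u$, so that matching frequencies across the $p$ arguments forces the sets $u_j$ to coincide. Everything else is a direct transcription of the Fourier argument.
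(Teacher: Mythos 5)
Your proof is correct and is exactly the argument the paper intends: the paper simply states that Lemma~\ref{lem:walshdecomp} ``follows by the same arguments as the proof of Lemma~\ref{lem_fs},'' i.e.\ expand $\langle f,\ldots,f\rangle_{p,\wal}$ by multilinearity over the ANOVA terms and kill the off-diagonal terms using Lemma~\ref{lem_fundamental_wal}. Your added observation that $\ominus$ fixes $0$ and maps nonzero digits to nonzero digits, so that $\bsk_j=\ominus^j\bsk_0$ forces the coordinate supports $u_j$ to coincide, is precisely the bookkeeping needed to transfer the Fourier orthogonality argument to the Walsh case.
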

The proof of
Lemma~\ref{lem:walshdecomp} follows by the same arguments as the proof of Lemma~\ref{lem_fs}.

We may estimate $\sigma_{p, \wal}(f_u)$ or their sums in
the same way we did for their Fourier analogues $\sigma_{p}(f_u)$.
For $u\subseteq\cd$, define $\ult^{[p]}_{u,\wal}$ via
\begin{align*}
\ul\tau^{[p]}_{u, \wal} + \mu^p_{\wal} =
\int_{[0,1]^{dp}}  \prod_{j=0}^{p-1}
f\bigl(\{(\ominus 1)^j (\bsx_{u,j} \ominus \bsx_{u,j+})\}\glu\bsy_{-u,j}\bigr) &
\prod_{j=0}^{p-1}\mrd\bsx_{u,j}\prod_{j=0}^{p-1}\mrd\bsy_{u,j}.
\end{align*}
Here $\ult^{[p]}_{\emptyset, \wal} = 0$.

\begin{theorem}
Let $f\in L^p[0,1]^d$, for integer $p\ge 2$,
with ANOVA decomposition $f= \sum_u
f_u$. Then for any $u \subseteq \cd$ we have
$$
\ul\tau^{[p]}_{u, \wal} +\mu^p_{\wal} = \sum_{v \subseteq u} \sigma_{p, \wal}(f_v).
$$
\end{theorem}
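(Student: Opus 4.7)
The plan is to mirror the proof of Theorem~\ref{thm:decomp} line by line, replacing the Fourier orthogonality result of Lemma~\ref{lem_fundamental} with its Walsh analogue Lemma~\ref{lem_fundamental_wal}. The point is that nothing about the earlier Fourier argument used properties specific to complex exponentials beyond: (i) they form a basis that diagonalizes the multilinear bracket, and (ii) that bracket collapses onto the diagonal $\bsk_j = (-1)^j \bsk_0$. The Walsh analogue gives exactly the same structural fact with $(-1)^j$ replaced by $\ominus^j$.

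The key steps, in order, would be as follows. First, substitute the Walsh expansion $f(\bsx) = \sum_{\bsk\in\natu_0^d} \wh f_{\wal}(\bsk)\,\wal_{\bsk}(\bsx)$ into the integrand defining $\ul\tau^{[p]}_{u,\wal} + \mu^p_{\wal}$, interchanging sum and integral (justified by $f\in L^p$ and boundedness of $|\wal_{\bsk}|\equiv 1$). Second, for each tuple $(\bsk_0,\dots,\bsk_{p-1})$ factor the integrand along the $u$ versus $-u$ split: on the $-u$ coordinates the variables $\bsy_{-u,j}$ are independent across $j$, so the corresponding part is $\prod_j \int \wal_{\bsk_{j,-u}}(\bsy_{-u,j})\rd\bsy_{-u,j}$, which forces each $\bsk_{j,-u}=\bszero$; on the $u$ coordinates the $\bsx_{u,j}$ are coupled via $(\ominus 1)^j(\bsx_{u,j}\ominus\bsx_{u,j+})$, and Lemma~\ref{lem_fundamental_wal} forces $\bsk_{j,u} = \ominus^j \bsk_{0,u}$ and contributes $1$ when that happens. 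Third, collect surviving terms: the sum over $(\bsk_0,\dots,\bsk_{p-1})$ reduces to a sum over a single $\bsk_u\in\natu_0^{|u|}$ with contribution $[\wh f_{\wal}(\bsk_u\glu\bszero_{-u})]^{\lceil p/2\rceil}[\wh f_{\wal}(\ominus\bsk_u\glu\bszero_{-u})]^{\lfloor p/2\rfloor}$.

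Finally, split this sum by the support set $v = \{j\in u : k_j\ne 0\}\subseteq u$, observing that by~\eqref{eq:synth} applied in the Walsh basis the ANOVA term $f_v$ has Walsh coefficients supported exactly on $\{\bsk : k_j\ne 0 \text{ iff } j\in v\}$. Hence the partial sum over $\bsk_u$ with support $v$ equals $\sigma_{p,\wal}(f_v)$ by the displayed identity for $\sigma_{p,\wal}$. Summing over $v\subseteq u$ yields the claim, and isolating the $v=\emptyset$ term (which contributes $\mu^p_{\wal}$) matches the $\mu^p_{\wal}$ on the left.

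The only real obstacle is bookkeeping with the operator $\ominus$, in particular checking that the parity argument separating $p$ even from $p$ odd in the proof of Lemma~\ref{lem_fundamental} goes through verbatim with $\ominus$ in place of additive inverse; but since the Walsh system is a character group under digitwise addition mod $b$, and Lemma~\ref{lem_fundamental_wal} is already stated, this step is essentially a change of notation rather than new content. The rest of the argument is routine and parallels Theorem~\ref{thm:decomp} exactly, which is why the authors can declare the proof to follow ``by the same arguments.''
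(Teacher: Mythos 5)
Your proposal is correct and follows exactly the route the paper intends: the paper's own ``proof'' is just the remark that the argument of Theorem~\ref{thm:decomp} carries over with Lemma~\ref{lem_fundamental_wal} in place of Lemma~\ref{lem_fundamental}, which is precisely what you spell out (substitute the Walsh expansion, kill the $-u$ frequencies by independence, collapse the $u$ frequencies onto the diagonal $\bsk_j=\ominus^j\bsk_0$, and group the surviving $\bsk_u$ by support $v\subseteq u$). Your version is in fact slightly more careful than the paper's, since you retain the general $\lceil p/2\rceil$/$\lfloor p/2\rfloor$ form of $\sigma_{p,\wal}$ rather than the even-$p$ modulus simplification.
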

The proof of this result follows along the same lines as the proof of Theorem~\ref{thm:decomp}.

In general, for $p>2$,
$\sigma_p(f)$ and $\sigma_{p,\wal}(f)$ are different
and the Walsh measure will depend on
the base $b$ that was used. Parseval's
identity implies that $\sigma_2(f) = \sigma_{2,\wal}(f)$.

\subsection{Change of variable and dimension reduction}

Our $p$-fold inner products are defined
through a $pd$ dimensional integral.
But they are equivalent to a $(p-1)d$ dimensional
integral.

\begin{lemma}\label{lem:dimreduce}
For integers $p\ge 2$ and $d\ge 1$,
let $f_0,f_1,\dots,f_{p-1}\in L^p[0,1)^d$.
Then
\begin{align*}
&\int_{[0,1]^{dp}}\prod_{j=0}^{p-1}
f_j(\{(-1)^j (\bsx_j-\bsx_{j+})\})\prod_{j=0}^{p-1}\rd\bsx_j\\
& =
\int_{[0,1]^{dp-d}}
f_0(\bsy_0)f_1(\bsy_1)\cdots f_{p-2}(\bsy_{p-2})
f_{p-1}\bigl(\{\bsy_0-\bsy_1+ \dots + (-1)^{p-2} \bsy_{p-2}\}\bigr)
\prod_{j=0}^{p-2}\rd\bsy_j.
\end{align*}
\end{lemma}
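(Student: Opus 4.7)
The plan is to execute an iterated change of variables that replaces the cyclic increments $(-1)^j(\bsx_j-\bsx_{j+})$ by free parameters $\bsy_0,\dots,\bsy_{p-2}$, leaving $\bsx_{p-1}$ as a trivial variable to be integrated out at the end.

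First, I would apply Fubini and fix $\bsx_{p-1}\in[0,1]^d$. The key technical ingredient is that for any fixed $\bsc\in[0,1]^d$ and any sign $\epsilon=\pm1$, the map $\bsx\mapsto\{\epsilon(\bsx-\bsc)\}$ is a measure-preserving bijection of $[0,1]^d$ onto itself (modulo a Lebesgue-null set). I would iterate this from $j=p-2$ down to $j=0$: set $\bsy_{p-2}=\{(-1)^{p-2}(\bsx_{p-2}-\bsx_{p-1})\}$, so that for fixed $\bsx_{p-1}$ the map $\bsx_{p-2}\mapsto\bsy_{p-2}$ is measure-preserving; then set $\bsy_{p-3}=\{(-1)^{p-3}(\bsx_{p-3}-\bsx_{p-2})\}$ with $\bsx_{p-2}$ now a function of $\bsx_{p-1}$ and $\bsy_{p-2}$, and so on down to $\bsy_0=\{\bsx_0-\bsx_1\}$. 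At each step the Jacobian has absolute value $1$, and each $\bsy_j$ ranges independently over $[0,1]^d$. After these substitutions, the arguments of $f_0,\dots,f_{p-2}$ are precisely $\bsy_0,\dots,\bsy_{p-2}$.

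Next I would identify the argument of $f_{p-1}$. From the recurrence $\bsx_j\equiv(-1)^j\bsy_j+\bsx_{j+1}\pmod 1$, telescoping gives
\begin{equation*}
\bsx_0-\bsx_{p-1}\equiv\sum_{j=0}^{p-2}(-1)^j\bsy_j\pmod 1,
\end{equation*}
so that $\{(-1)^{p-1}(\bsx_{p-1}-\bsx_0)\}=\{(-1)^p\sum_{j=0}^{p-2}(-1)^j\bsy_j\}$, which is the stated argument $\{\bsy_0-\bsy_1+\cdots+(-1)^{p-2}\bsy_{p-2}\}$ (after a harmless reflection $\bsy_j\mapsto\{-\bsy_j\}$ if one needs to absorb the sign $(-1)^p$, using again that $\bsx\mapsto\{-\bsx\}$ is measure-preserving and the other $f_j$ arguments transform symmetrically). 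Crucially, the integrand now depends on $\bsx_{p-1}$ only through the differences, hence not at all, so $\int_{[0,1]^d}\rd\bsx_{p-1}=1$ eliminates that variable.

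The main obstacle is bookkeeping: tracking which $\bsx_j$ has been replaced at each stage and verifying that the Jacobian stays $1$ under the iterated substitutions. Once one observes that at stage $j$ only $\bsx_j$ is being substituted (with all later $\bsx_k$, $k>j$, playing the role of the fixed constant $\bsc$ in the measure-preserving map $\bsx\mapsto\{(-1)^j(\bsx-\bsc)\}$), both the Jacobian computation and the telescoping identity follow mechanically, and the lemma drops out.
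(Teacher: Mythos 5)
Your change of variables is, up to bookkeeping, the same as the paper's: the paper writes the substitution as a single unimodular triangular matrix acting componentwise mod $1$ (displayed for $p=4$, with the general case asserted to be analogous), while you factor it into a chain of elementary maps $\bsx_j\mapsto\{(-1)^j(\bsx_j-\bsx_{j+1})\}$, each measure-preserving on the torus for fixed later coordinates, and then integrate out the unused $\bsx_{p-1}$. For \emph{even} $p$ your argument is complete and correct: the telescoping identity $\bsx_0-\bsx_{p-1}\equiv\sum_{j=0}^{p-2}(-1)^j\bsy_j\pmod 1$ makes the argument of $f_{p-1}$ come out exactly as stated, and the factor $\int_{[0,1]^d}\rd\bsx_{p-1}=1$ finishes the proof.

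The gap is the odd-$p$ case. Your telescoping correctly yields $\{(-1)^p(\bsy_0-\bsy_1+\cdots+(-1)^{p-2}\bsy_{p-2})\}$ as the argument of $f_{p-1}$, and for odd $p$ the leading sign is $-1$. The ``harmless reflection'' $\bsy_j\mapsto\{-\bsy_j\}$ you invoke to absorb it is not harmless: it simultaneously replaces $f_j(\bsy_j)$ by $f_j(\{-\bsy_j\})$ for $j=0,\dots,p-2$, which changes the integral for general $f_j$; the arguments do not ``transform symmetrically.'' No patch can succeed, because the identity as literally stated fails for odd $p$ with distinct $f_j$: take $p=3$, $d=1$, $f_0(x)=e^{2\pi\mathrm{i}x}$ and $f_1(x)=f_2(x)=e^{-2\pi\mathrm{i}x}$. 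The left side is $0$ (by Lemma~\ref{lem_fundamental}, which requires $\bsk_2=\bsk_0$), while the right side is
\begin{equation*}
\int_{[0,1]^2} e^{2\pi\mathrm{i}y_0}\,e^{-2\pi\mathrm{i}y_1}\,e^{-2\pi\mathrm{i}(y_0-y_1)}\rd y_0\rd y_1=1.
\end{equation*}
(A real-valued counterexample can be built the same way from sines and cosines.) What your computation actually establishes is the lemma for even $p$, and, for odd $p$, the corrected identity with $f_{p-1}\bigl(\{-(\bsy_0-\bsy_1+\cdots+(-1)^{p-2}\bsy_{p-2})\}\bigr)$ on the right; the two versions agree in the diagonal case $f_0=\cdots=f_{p-1}=f$ used elsewhere in the paper, by reindexing $\bsk\mapsto-\bsk$ in the Fourier sum. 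You should state this sign caveat explicitly rather than appeal to a symmetry that the $f_j$ need not have.
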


\begin{proof}
We prove it for $p=4$; the general case uses
the same argument.
For $\bsx_0,\dots,\bsx_{3}\in[0,1)^d$
let $\bsy_0,\dots,\bsy_{3}$ be defined by
$$
\begin{pmatrix}
\bsy_{0}\\
\bsy_{1}\\
\bsy_{2}\\
\bsy_{3}
\end{pmatrix}
=
\begin{pmatrix}
1 & -1 & \phm0 & \phm0\\
0 & -1 & \phm1 & \phm0\\
0 & \phm0 & \phm1 & -1 \\
0 & \phm0 & \phm0 & -1
\end{pmatrix}
\begin{pmatrix}
\bsx_{0}\\
\bsx_{1}\\
\bsx_{2}\\
\bsx_{3}\\
\end{pmatrix}\tmod 1
$$
where both the matrix multiplication and the
modulus are taken componentwise.
This transformation
has Jacobian $1$ almost everywhere.
To simplify the integrals, we extend each $f_j$ to
a periodic function on $\real^d$, allowing
us to remove the $\{\cdots\}$ operation.
Making the change of variable,
\begin{align*}
&\iiiint f_0(\bsx_0-\bsx_1)f_1(\bsx_2-\bsx_1)f_2(\bsx_2-\bsx_3)f_3(\bsx_0-\bsx_3)
\rd \bsx_0\rd \bsx_1\rd \bsx_2\rd \bsx_3\\
 = &
\iiiint f_0(\bsy_0)f_1(\bsy_1)f_2(\bsy_2)f_3(\bsy_0-\bsy_1+\bsy_2)
\rd \bsx_0\rd \bsx_1\rd \bsx_2\rd \bsy_3\\
 = &
\iiint f_0(\bsy_0)f_1(\bsy_1)f_2(\bsy_2)f_3(\bsy_0-\bsy_1+\bsy_2)
\rd \bsy_0\rd \bsy_1\rd \bsy_2.\qedhere
\end{align*}
\end{proof}

Lemma~\ref{lem:dimreduce} also applies for the Walsh case. We have
\begin{align*}
&\int_{[0,1]^{dp}}\prod_{j=0}^{p-1}
f_j(\{(\ominus 1)^j (\bsx_j \ominus \bsx_{j+})\})\prod_{j=0}^{p-1}\rd\bsx_j\\
& =
\int_{[0,1]^{dp-d}}
f_0(\bsy_0)f_1(\bsy_1)\cdots f_{p-2}(\bsy_{p-2})
f_{p-1}\bigl(\{\bsy_0\ominus \bsy_1 \oplus \dots \oplus (\ominus 1)^{p-1} \bsy_{p-1}\}\bigr)
\prod_{j=0}^{p-2}\rd\bsy_j.
\end{align*}

\subsection{Weighted coefficients}\label{subsec_weighted_coeff}

The quantity $\langle f,f,\dots,f,g\rangle_{p+1}$
is also of interest for special choices of the
function $g$.
The result is to give weighted sums
of powers of the Fourier (or Walsh) coefficients.
We take $p$ to be an odd integer and
$g$ to be a weighting function.

Of particular interest is the Dirichlet kernel
\begin{equation*}
D_N(\bsx) = \sum_{\bsk \in \{-N,\ldots, N\}^d} \mathrm{e}^{2\pi \mathrm{i} \bsk \cdot \bsx} = \prod_{j=1}^d \frac{\sin (2\pi (N+1/2) x_j)}{\sin (\pi x_j)}.
\end{equation*}
If $x_j = 0$ or $1$ we set ${\sin (2\pi (N+1/2) x_j)}/{\sin (\pi x_j)} := 2N+1$.
For an odd integer $p > 1$ we have
\begin{equation*}
\langle f, \ldots, f, D_N \rangle_{p} = \sum_{\bsk \in \{-N,\ldots, N\}^d} \bigl|\widehat{f}(\bsk) \bigr|^{p-1}.
\end{equation*}
The result is a non-negative  importance measure for $f$ apart
from its very highest spatial frequencies.
Further, for $\boldsymbol{m} \in \mathbb{Z}^d$ we have
\begin{equation*}
\langle f, \ldots, f, D_N \mathrm{e}^{2\pi \mathrm{i} \boldsymbol{m} \cdot \bsx} \rangle_{p} = \sum_{\bsk \in \{-N,\ldots, N\}^d} \bigl|\widehat{f}(\bsk + \boldsymbol{m}) \bigr|^{p-1}.
\end{equation*}

The Dirichlet kernel for the Walsh system in base $b$ is
\begin{equation*}
D_{m, \wal}(\bsx) = \sum_{\bsk \in \{0,\ldots, b^m-1\}^d} \wal_{\bsk}(\bsx) = \prod_{j=1}^d 1_{[0,b^{-m})}(x_j).
\end{equation*}
Thus for odd integer $p > 1$ we have
\begin{equation*}
\langle f, \ldots, f, D_{m,\wal} \rangle_{p, \wal} = \sum_{\bsk \in \{0,\ldots, b^m-1\}^d} \bigl|\widehat{f}_{\wal}(\bsk) \bigr|^{p-1}.
\end{equation*}
Further, for $\boldsymbol{a} \in \mathbb{N}_0^d$ we have
\begin{equation*}
\langle f, \ldots, f, D_{m,\wal} \wal_{\bsa} \rangle_{p, \wal} = \sum_{\bsk \in \{0,\ldots, b^m-1\}^d} \bigl|\widehat{f}_{\wal}(\bsk \oplus \bsa) \bigr|^{p-1}.
\end{equation*}

\section{Special case functions}\label{sec:spec}

Here we consider some simple functional
forms for which our analysis can be carried
out in closed form.
The first ones are functions of product form,
including rectangular spikes.
We will see the effects of third and fourth
moments on the $\ult^{(p)}_u$ and the effects
of spectral sparsity on $\ult^{[p]}_u$.
The second are additive functions where we will
see the spectral method does not introduce any
apparent interactions.

The original Sobol' indices relate to variance
components via a Moebius relation
$$
\sigma^{2}_u = \sum_{v\subseteq u}(-1)^{|u-v|}\ult^{2}_v,
$$
for $u\ne\emptyset$.
Recalling that $\ult^{(p)}_u$, $\ult^{[p]}_u$ and $\ult^{[p]}_{u, \wal}$
are generalizations of $\mu^2+\ult^2_u$,
we can define analogues of variance components via
\begin{align}
\sigma^{(p)}_u &= \sum_{v\subseteq u}(-1)^{|u-v|}\ult^{(p)}_v,
\label{eq:newsigsqra}\\
\sigma^{[p]}_u &= \sum_{v\subseteq u}(-1)^{|u-v|}\ult^{[p]}_v,\quad\text{and} \label{eq:newsigsqrb} \\ \sigma^{[p]}_{u, \wal} & = \sum_{v \subseteq u} (-1)^{|u-v|} \ult^{[p]}_{v, \wal},
\end{align}
for $u\ne\emptyset$.
We also have $\sigma^{(p)}_\emptyset
=\sigma^{[p]}_\emptyset = \sigma^{[p]}_{\emptyset, \wal} =0$.

\subsection{Product functions}

Product functions are frequently used as examples
for sensitivity measures. A notable example is \cite{sobo:1993}.
Throughout this subsection we suppose that
\begin{align}\label{eq:defprod}
f(\bsx) = \prod_{j=1}^d h_j(x_j)
\equiv \prod_{j=1}^d (\mu_j+\tau_jg_j(x_j))
\end{align}
for real-valued functions $g_j$ and $h_j$ defined on $[0,1]$.
The functions $g_j$ satisfy $\int_0^1g_j(x)\rd x=0$
and $\int_0^1g_j(x)^2\rd x=1$.

The ANOVA components of a product function are
 $\sigma^2_u = \prod_{j\in u}\tau^2_j\prod_{j\not\in u}\mu_j^2$
for $u\ne\emptyset$. For a product function
$\mu^2+\ult^2_u = \prod_{j\in u}(\mu_j^2+\tau^2_j)\prod_{j\not\in u}\mu_j^2$.
An important subset of variables must include any $j$ with $\mu_j=0$.
When $\mu\ne 0$ we may write
$$
\mu^2+\ult^2_u = \mu^2\prod_{j\in u}(1+\tau^2_j/\mu_j^2)
$$
and then see that coefficients of variation $\upsilon_j=\tau_j/\mu_j$
govern importance.

We need $\int_0^1|f(x)|^p\rd x<\infty$ to make the
importance measures finite.
We will use
$\gamma_j = \int_0^1g_j^3(x)\rd x$
and
$\kappa_j = \int_0^1g_j^4(x)\rd x$
which we assume are finite.
If $x\sim\dustd(0,1)$, then $\gamma_j$ is
the skewness of $g_j(x)$ and
$\kappa_j-3$ is the kurtosis.

\subsubsection*{Generalizing the Fourier and Walsh syntheses}
To generalize the Fourier synthesis
we write
$h_j(x) = \sum_{k\in\ints}\wh h_j(k)e^{2\pi ikx}$
(in mean square) for
$\wh h_j(k) = \int_0^1 h_j(x)e^{-2\pi ikx}\rd x.$
We note that $\mu = \prod_j\mu_j$ where
$\mu_j = \wh h_j(0)$.
Now
$\wh f(\bsk) = \prod_{j=1}^d \wh h_j(k_j)$
and for even $p\ge 2$
\begin{align*}
\ult_u^{[p]}+\mu^p
& =
\prod_{j\not\in u}|\mu_j|^p\sum_{\bsk_u\in\ints^{|u|}}
\prod_{j\in u}|\wh h_j(k_j)|^p
 =
\prod_{j\not\in u}|\mu_j|^p
\prod_{j\in u}\Biggl(
\sum_{k_j\in\ints}|\wh h_j(k_j)|^p
\Biggr).
\end{align*}
Using the alternating sum~\eqref{eq:newsigsqrb}
and simplifying, we obtain
\begin{align*}
\sigma_u^{[p]}
& =
\prod_{j\not\in u}|\mu_j|^p
\prod_{j\in u}
\biggl(\,
\sum_{k_j\in\ints_*}|\wh h_j(k_j)|^p
\biggr)
\end{align*}
for $u\ne\emptyset$. The effect is to change $\ints$ to $\ints_*$
in the sums.

Given two functions $h_j$ with the same variance,
the measure
$\sum_{k_j\in\ints_*}|\wh h_j(k_j)|^p$, for $p>2$, is a measure
of sparsity for the spectrum.
It does not favor either
high or low frequencies. To put more emphasis on high or low frequencies one could use weighted coefficients as outlined in subsection~\ref{subsec_weighted_coeff}.

Analogous formulae hold for the Walsh synthesis.
Now we write the factors of $f$ as
$h_j(x) = \sum_{k \in \mathbb{N}_0} \wh h_{j, \wal}(k) \wal_k(x)$
for
$\wh h_{j, \wal}(k) = \int_0^1 h_j(x) \overline{\wal_k}(x) \rd x$.
Here $\mu_{\wal} = \prod_j \mu_{j, \wal}$ where $\mu_{j, \wal} = \wh h_{j,\wal}(0)$ and
$\wh f_{\wal}(\bsk) = \prod_{j=1}^d \wh h_{j,\wal}(k_j).$
For even $p \ge 2$
the same argument that we used in the Fourier case leads to
\begin{align*}
\sigma_{u, \wal}^{[p]}
& =
\prod_{j\not\in u}|\mu_{j, \wal}|^p
\prod_{j\in u}
\Biggl(
\sum_{k_j\in\natu}|\wh h_{j, \wal}(k_j)|^p
\Biggr).
\end{align*}
for $u\ne\emptyset$.

\subsubsection*{Generalizing the Sobol' identity}
When we generalize the Sobol' identity
we get
\begin{align*}
\ult^{(p)}_u+\mu^p &= \int
\prod_{k=1}^pf(\bsx_u\glu\bsz_{-u}^{(k)})
\mrd\bsx\prod_{k=1}^p\mrd\bsz^{(k)}
=
\prod_{j\in u}\int_0^1 h_j(x_j)^p\rd x_j
\prod_{j\not\in u}\mu_j^p.
\end{align*}
Where the Fourier synthesis had a $p$'th moment
$\sum_{k_j\in\ints}|\wh h_j(k_j)|^p$ of Fourier
coefficients, this approach has an ordinary
$p$'th moment $\int_0^1h_j(x)^p\rd x$.
Using the alternating sum~\eqref{eq:newsigsqra}
we obtain
\begin{align*}
\sigma^{(p)}_u
&=
\prod_{j\not\in u}\mu_j^p
\prod_{j\in u}\Bigl(\int_0^1h_j(x)^p\rd x-\mu_j^p\Bigr)
\end{align*}
for $u\ne\emptyset$.

For the generalized Sobol' identity we can make use
of the moments $\gamma_j$ and $\kappa_j$ of $h_j$.
The special cases of most interest have $p=3$ or $4$.
For $p=3$
$$\int_0^1h_j(x)^3\rd x = \mu_j^3+3\mu_j\tau_j^2+\gamma_j\tau_j^3$$
and so for $u\ne\emptyset$,
\begin{align*}
\ult_u^{(3)} & =
\prod_{j\not\in u}\mu_j^3
\prod_{j\in u}
\bigl(\mu_j^3+3\mu_j\tau_j^2+\gamma_j\tau_j^3\bigr)
-\mu^3
,\quad\text{and}\\
\sigma^{(3)}_u&=
\prod_{j\not\in u}\mu_j^3
\prod_{j\in u}\tau_j^2\bigl(3\mu_j+\gamma_j\tau_j\bigr).
\end{align*}
The $\sigma^{(3)}_u$ are
`components of skewness' analogues of the
components of variance $\sigma^2_u$.
Some of these components may be negative.
If every $\mu_j>0$ and every $\tau_j>0$,
then a negative component
of skewness arises if $3\mu_j+\gamma_j\tau_j<0$
holds for an odd number of indices $j\in u$.

Product functions illustrate one challenge with
importance measures taking negative values.
The same variable $x_j$ can drive the function
towards negative values through one component
$\sigma^{(3)}_u$ while driving it towards
positive values through another
component $\sigma^{(3)}_v$.
Similarly, whether the total effect $\ult^{(3)}_u$ is
positive or negative depends on the signs
of $\mu_j$ for $j\not\in u$.
These features make $p=3$ hard to interpret.

For $p=4$, we find
$$\int_0^1h_j(x)^4\rd x =
\mu_j^4+6\mu_j^2\tau_j^2+4\mu_j\gamma_j\tau_j^3+\kappa_j\tau_j^4
$$
and so for $u\ne\emptyset$,
\begin{align*}
\ult_u^{(4)} & =
\prod_{j\not\in u}\mu_j^4
\prod_{j\in u}
\bigl(
\mu_j^4+6\mu_j^2\tau_j^2+4\mu_j\gamma_j\tau_j^3+\kappa_j\tau_j^4
\bigr)
-\mu^4,\quad\text{and}\\
\sigma^{(4)}_u &=
\prod_{j\not\in u}\mu_j^4
\prod_{j\in u}\tau_j^2\bigl(6\mu_j^2+4\mu_j\gamma_j\tau_j+\kappa_j\tau_j^2\bigr).
\end{align*}
If $j\not\in u\ne\emptyset$ and $\mu_j\ne 0$, then
$$
\frac{\sigma^{(4)}_{u\cup\{j\}}}{\sigma^{(4)}_u}
=
\upsilon_j^2
\bigl(6+4\gamma_j\upsilon_j+\upsilon_j^2\kappa_j\bigr).
$$
where $\upsilon_j=\tau_j/\mu_j$ is the
$j$'th coefficient of variation.

A variable with a large absolute coefficient
of variation $|\upsilon_j|$ tends to raise
all of the $\sigma_u^{(4)}$ in which it participates
just as it does for the $p=2$ ANOVA case.
Additionally a variable with large fourth moment $\kappa_j$
becomes more important.  Variables with
large skewness $\gamma_j$ become more important
if $\gamma_j$ has the same sign as $\mu_j$
but less important if the opposite holds.  Both
of these findings are intuitively reasonable when
we are interested in driving $|f|$ to its largest
values.

\subsection{Indicators of rectangles}

A special case of the
product functions are indicator (characteristic)
functions of hyperrectangles.
These have $h_j(x) = 1$ for $x_j\in[x_{j*},x_{j*}+\epsilon_j)$
and $h_j(x)=0$ for $x \in [0,1) \setminus [x_{j*},x_{j*}+\epsilon_j)$,
so that $f(\bsx)$ is the indicator
of a hyperrectangle with volume $\prod_j\epsilon_j$.
For a binary function, all of the $x_j$ have
to be in their respective intervals for the
function to take the high value. This means that we
should expect important interactions. To model
a spiky function we would have
all of the $\epsilon_j$ be small. Then the
most important one should be the smallest one.
Here we let $\epsilon=\mu = \prod_{j=1}^d\epsilon_j$.

The generalization of Sobol's identity
works entirely with moments of $h_j$ and
so without loss of generality
$h_j(x) = 1$ for $x<\epsilon_j$ and is $0$
otherwise.
The generalization of the Walsh-based synthesis
is not invariant to the interval one chooses.
In this setting we prefer the Fourier-based synthesis.
Shifting the interval from $[0,\epsilon_j)$
to $[x_{*j},x_{*j}+\epsilon_j)$ for
 $0\le x_{0j}\le 1-\epsilon_j$
changes the phase but not the modulus of $\wh h_j(k)$
leaving the importance measures unchanged when
$p\ge 2$ is even.

For the generalized Sobol' index construction we find
for $u\ne\emptyset$
\begin{align*}
\ult_u^{(p)}
&= \prod_{j\not\in u}\epsilon_j^p \prod_{j\in u}\epsilon_j
-\epsilon^p
=\epsilon^p
\Bigl(\prod_{j\in u}\epsilon_j^{-(p-1)}-1\Bigr),\quad\text{and,}\\
\sigma_u^{(p)}
&= \prod_{j\not\in u}\epsilon_j^p\prod_{j\in u}
(\epsilon_j -\epsilon_j^p)
=\epsilon^p\prod_{j\in u}(\epsilon_j^{-(p-1)}-1).
\end{align*}
Variables with smaller $\epsilon_j$ are more
important than those with larger $\epsilon_j$
and the effect is magnified at larger $p$.
Both $\ult_u{(p)}$ and $\sigma_u^{(p)}$ are
always nonnegative for integers $p\ge 2$
without requiring $p$ to be even.

We now consider the Fourier synthesis for even $p\ge 2$.
After applying some trigonometric identities,
we find that the key quantity there,
replacing $\epsilon_j-\epsilon_j^p$ satisfies
$$
\sum_{k\in\ints_*}|\wh h_j(k)|^p
= 2\sum_{k=1}^\infty\Bigl( \frac{\sin(\pi k\epsilon_j)}{\pi k}\Bigr)^p
\equiv T_p(\epsilon_j).
$$
Thus
$
\sigma_u^{[p]} =\epsilon^p \prod_{j\in u}T_p(\epsilon_j)/\epsilon_j^p.
$
Lemma~\ref{lem:dimreduce} gives some insight into $T_p$ for $\epsilon_j<1/2$
as follows.
For $p=4$,
$\ult^{[4]}_u +\mu^4 = \prod_{j\in u}Q_4(\epsilon_j)$ where
for $f(x)=1_{x<\epsilon}$ and $0<\epsilon<1/2$,
\begin{align*}
Q_4(\epsilon)&= \int_{0}^1 \int_0^1 \int_0^1 f(y_0) f(y_1) f(y_2) f(\{y_0-y_1+y_2\}) \rd y_0 \rd y_1 \rd y_2 \\
&=   \int_{0}^\epsilon \int_0^\epsilon \int_0^\epsilon 1_{\{y_0-y_1+y_2\}< \epsilon} \rd y_0 \rd y_1 \rd y_2 \\
& =  \frac{2}{3} \epsilon^3.
\end{align*}
As a result we have the identity $T_4(\epsilon) = \frac23\epsilon^3-\epsilon^4$,
and so for $u\ne\emptyset$,
\begin{align*}
\ult_u^{[4]}
&=  \prod_{j \notin u} \epsilon_j^4 \prod_{j \in u} \frac{2}{3} \epsilon_j^3 - \epsilon^4 = \epsilon^4 \Bigl(\prod_{j\in u} \frac{2}{3} \epsilon_j^{-1}   - 1 \Bigr) ,\quad\text{and,}\\
\sigma_u^{[4]}
&= \epsilon^4 \prod_{j\in u} \Bigl(\frac{2}{3} \epsilon_j^{-1} - 1\Bigr).
\end{align*}
For even $p\ge 2$ we will find a quantity $Q_p(\epsilon)$
like $Q_4$ is a $p-1$ dimensional volume proportional to $\epsilon^{p-1}$.
As a result, the Fourier synthesis will use
importance factors which grow as $\epsilon_j^{-1}$
compared to $\epsilon_j^{-p+1}$ for the moment method.

\subsection{Additive functions}
It frequently happens that high dimensional functions
enountered in practice are very nearly additive.
For example \cite{cafmowen} find that a $360$
dimensional function motivated by a financial
valuation problem is very nearly an additive function
of its inputs.
It is desirable that a measure of variable importance
for additive functions should only give nonzero importance
to singletons $u=\{j\}$.

Here we consider additive functions
\begin{equation}\label{ex_sum}
f(\bsx) = \mu + \sum_{j=1}^dh_j(x_j)
\end{equation}
where $\int_0^1h_j(x)\rd x=0$,
$\int_0^1h_j(x)^2\rd x=\tau_j^2$,
$\int_0^1h_j(x)^3\rd x=\gamma_j$,
and
$\int_0^1h_j(x)^4\rd x=\kappa_j$.

For even integers $p\ge 2$ we find that
$\sigma^{[p]}_{\{j\}}=\sum_{k\ne 0}|\wh h_j(k)|^p$
and $\sigma^{[p]}_{\{j\},\wal}=\sum_{k\ne 0}|\wh h_j(k)|^p$
are the only nonzero components.

For integer $p\ge 2$,
\begin{align*}
\ult_u^{(p)}+\mu^p
&=
\int \prod_{k=1}^p\Bigl[
\mu + \sum_{j\in u}h_j(x_j)
+\sum_{j\not\in u}h_j(y_{j}^{(k)})
\Bigr]\rd\bsx\prod_{k=1}^p\rd\bsy^{(k)}\\
&=
\int
\Bigl[\mu + \sum_{j\in u}h_j(x_j)\Bigr]^p\rd\bsx.
\end{align*}
For $p=3$,
$\ult_u^{(3)}+\mu^3
=
\mu^3+3\mu\sum_{j\in u}\tau_j^2
+\sum_{j\in u}\gamma_j$,
so that
$\ult^{(3)}_u = \sum_{j\in u}(\mu\tau_j^2+\gamma_j)$.
Next
\begin{align*}
\sigma^{(3)}_u
&=
\sum_{v\subseteq u}(-1)^{|u-v|}\sum_{j\in v}(3\mu\tau_j^2+\gamma_j).
\end{align*}
Reversing the order of summation, we find that
$\sigma^{(3)}_u=0$ for $|u|>2$
and otherwise
$$
\sigma^{(3)}_{\{j\}} = 3\mu\tau_j^2+\gamma_j,
$$
compared to $\sigma^{(2)}_{\{j\}}=\tau_j^2$.
We see that the only nonzero components of skewness for an additive
function are for singletons.

The same simplification does not hold in general.
For $p=4$,
\begin{align*}
\ult^{(4)}_u +\mu^4& =
\mu^4 +
6\mu^2\sum_{j\in u}\tau^2_j
+4\mu\sum_{j\in u}\gamma_j
+\sum_{j\in u}\kappa_j
+\sum_{j\in u}\sum_{k\in u-\{j\}}\tau_j^2\tau_k^2,\quad\text{so,}\\
\ult^{(4)}_u& =
\sum_{j\in u}\bigl(
6\mu^2\tau^2_j+4\mu\gamma_j+\kappa_j-\tau^4_j
\bigr)
+\biggl(\,\sum_{j\in u}\tau_j^2\biggr)^2.
\end{align*}
As a result
\begin{align*}
\sigma^{(4)}_u
& =
\begin{cases}
6\mu^2\tau^2_j+4\mu\gamma_j+\kappa_j-\tau^4_j, & u=\{j\}\\
2\tau^2_j\tau_k^2, & u=\{j,k\},\ j\ne k\\
0, & |u|>2.
\end{cases}
\end{align*}



\section{Discussion}\label{sec:disc}

We have shown that it is possible to generalize the
ANOVA decomposition to higher order methods.
Working directly with either Sobol's identities
or with a synthesis of Fourier or Walsh terms
both lead to measures that can be estimated by
quadrature. For even values $p$ the generalizations
give non-negative importance measures. For odd
values of $p$ the Dirichlet kernel trick recovers
non-negative importance measures for the Fourier
and Walsh approaches. On test functions that we can
study analytically, we see that these measures can
identify variables which drive the
function towards its extreme values.

\section*{Acknowledgments}
We thank Gary Tang
for bringing the problem to our attention.
This work was supported in part by 
grant DMS-0906056 from the U.S.\ National Science Foundation. J. D. is supported in part by a Queen Elizabeth 2 Fellowship from the Australian Research Council.
\bibliographystyle{apalike}
\bibliography{sensitivity}

\end{document}